\documentclass[11pt,oneside]{amsart}
\usepackage{graphicx} % Required for inserting images
\usepackage{amsmath}
\usepackage{amssymb}
\usepackage{eucal}
\usepackage{amsthm}
\usepackage{amscd}
\usepackage{hyperref}
\usepackage{soul}
\usepackage{url}
\usepackage{skull}
\usepackage{mathrsfs}
 \usepackage[authormarkup=none]{changes}

\usepackage{tikz}
\usepackage{tikz-cd}
\usetikzlibrary{shapes.geometric, arrows}
\usetikzlibrary{shapes,arrows}
\usepackage[latin1]{inputenc}
\usetikzlibrary{shapes,arrows}
\usetikzlibrary{shapes.multipart}
\usepackage{verbatim}
\usepackage{amssymb}
\usepackage[]{amsmath, amsthm, amsfonts,graphicx, amscd}
\usepackage[all,cmtip]{xy}
\input amssym.def \input amssym
\usepackage{forest}
\usepackage{mdframed}
\usepackage{framed}

\newcommand{\acl}{\operatorname{acl}}
\newcommand{\dcl}{\operatorname{dcl}}

\newtheorem{thm}{Theorem}[section]

\newtheorem{prop}[thm]{Proposition}
\newtheorem{cor}[thm]{Corollary}

\newtheorem {lem}[thm]{Lemma}

\newtheorem{conj}[thm]{Conjecture}

\theoremstyle{remark}
\newtheorem{rem}[thm]{Remark}

\newtheorem{np*}{Non-Proof}

\theoremstyle{definition}
\newtheorem{defn}[thm]{Definition}

\theoremstyle{plain}

\newcommand{\gen}[1]{\left\langle#1\right\rangle}

\def\Ind{\setbox0=\hbox{$x$}\kern\wd0\hbox to 0pt{\hss$\mid$\hss} \lower.9\ht0\hbox to 0pt{\hss$\smile$\hss}\kern\wd0}
\def\Notind{\setbox0=\hbox{$x$}\kern\wd0\hbox to 0pt{\mathchardef \nn=12854\hss$\nn$\kern1.4\wd0\hss}\hbox to 0pt{\hss$\mid$\hss}\lower.9\ht0 \hbox to 0pt{\hss$\smile$\hss}\kern\wd0}
\def\ind{\mathop{\mathpalette\Ind{}}}

\numberwithin{equation}{section}

\newcommand{\m}{\mathbb }
\newcommand{\mc}{\mathcal }
\newcommand{\mf}{\mathfrak }

\newcommand{\tp}{\operatorname{tp}}

\newcommand{\diff}{\operatorname{diff}}
 \makeatletter
\let\@wraptoccontribs\wraptoccontribs
\makeatother

\begin{document}

\title{On the number of independent solutions of \\ algebraic differential equations}
\author{James Freitag}
\address{James Freitag \\ University of Illinois Chicago, Department of Mathematics, Statistics, and Computer Science, 851 S. Morgan Street, Chicago, IL, USA 60607-7045 }
\email{jfreitag@uic.edu}

\author{Omar Le\'on S\'anchez}
\address{Omar Le\'on S\'anchez \\ Department of Mathematics, University of Manchester, Oxford Road, Manchester, M13 9PL, United Kingdom}
\email{omar.sanchez@manchester.ac.uk}

\author{Wei Li}
\address{Wei Li \\ State Key Laboratory of Mathematical Sciences, Academy of Mathematics and Systems Science, Chinese Academy of Sciences,  Beijing 100190, China}
\email{liwei@mmrc.iss.ac.cn}

\author{Joel Nagloo}
\address{Joel Nagloo \\ University of Illinois Chicago, Department of Mathematics, Statistics, and Computer Science, 851 S. Morgan Street, Chicago, IL, USA 60607-7045}
\email{jnagloo@uic.edu}
\thanks{JF is supported by NSF grant DMS-2452197 and CAREER award 1945251. OLS is partially supported by EPSRC grant EP/V03619X/1. WL is partially supported by  the Strategic Priority Research Program of Chinese Academy of Sciences under Grant XDA0480503, National Key R\&D Program of China 2023YFA1009401, NSFC Grants (No.12122118, 12288201), CAS Project for Young Scientists in Basic Research with Grant No. YSBR-008. JN is partially supported by NSF grant DMS-2348885. }

\contrib[with an appendix by]{Michael F. Singer}

\date{\today} 
\subjclass[2010]{03C60, 03C98, 12H05}
\keywords{differential fields, classification of differential equations, model theory}

\begin{abstract}
We prove a conjecture of Kumbhakar, Roy, and Srinivasan (2024) on the classification of order one differential equations, and a conjecture of Kumbhakar and Srinivasan (2025) on higher order equations. Both conjectures involve bounds for the number of independent solutions of the equation and are shown to be results of recent work in differential Galois theory using model theoretic techniques. In both cases, stronger versions of the conjectures hold when working over the field of constants (i.e., when the equation is autonomous). We then use inverse Galois theory to show that the bounds in the conjectures are optimal when working over a differential field which is differentially finitely generated over its constant subfield. We also show how recent results of Jaoui and Moosa (2024) on abelian reductions of differential equations can be used to recover some of the work of Kumbhakar and Srinivasan (2025). 
\end{abstract}

\maketitle

\tableofcontents

\section{Introduction}
In \cite{freitag2022any} it is shown that given $X$ an order one differential equation over a differential field $(k,\delta)$ of characteristic zero, if \emph{any four} distinct nonalgebraic solutions %$y_1, y_2, y_3, y_4$ of $X$ 
are algebraically independent over $k$, then \emph{all} distinct nonalgebraic solutions are algebraically independent over $k$. When $k$ equals its constant subfield $C$, four can be replaced by two. 

In the context of considering solutions in an arbitrary differential field extending $k$ (or even arbitrary meromorphic solutions), there is no meaningful way to change the above \emph{universal} quantification over solutions to an existential one - one might always build a differential field extension of $k$ containing arbitrarily many independent solutions of $X$. However, when restrictions are placed upon the field extensions of $k$ under consideration, such an existential version becomes possible. In \cite[Conjecture 1.2]{kumbhakar2024classification}, the authors consider only differential field extensions $K$ of $k$ adding no new constants, conjecturing: 

\begin{conj} \label{ordoneconj} \cite{kumbhakar2024classification}
A first order differential equation over $k$ (respectively, an autonomous differential equation over $C$) is not of general type if and only if it has at most three (respectively, at most one) algebraically independent solutions in any given differential field extension of $k$ having $C$ as its field of constants.
\end{conj}

In the above conjecture, one can assume without loss of generality that the differential field $k$ is algebraically closed and that the differential field extending $k$ having $C$ as its field of constants is $k^{\diff}$, the differential closure of $k$. %{\color{blue} (OLS: Are we now assuming that $C$ is algebraically closed? I am happy to assume this, as it does simplify otherwise annoying (little) arguments. Or even better outright assume that $k$ is algebraically closed?)}

We will see that the conjecture is closely related to an existential version of the above result of \cite{freitag2022any}: Let $X$ be a first order differential equation over $k$. If \emph{there exist} four algebraically independent solutions in $k^{\diff}$ to $X$, then \emph{there exist} infinitely many algebraically independent solutions of $X$ in $k^{\diff}$. When $X$ is autonomous, four can be replaced by two. Modulo understanding the model theoretic content of the definition of an order one equation of general type, the above conjecture of Kumbhakar, Roy, and Srinivasan is essentially equivalent to this existential version of the above result of \cite{freitag2022any}. 

We prove Conjecture \ref{ordoneconj} and some extensions in Section \ref{ordonesection}. In Section \ref{orderoneoptimal}, we show that over any nonconstant differential field which is (differentially) finitely generated over its constants, the condition of having four algebraically independent solutions can not be improved to any smaller number of independent solutions (this goes via the inverse problem in differential Galois theory). In a recent article \cite{kumbhakar2025strongly}, Kumbhakar and Srinivasan independently give a proof of the conjecture. One advantage of the proof given here is that we obtain more information in the case that the equation is of general type. Our proof also applies in the case of partial differential equations whose solutions lie in a differential field with finitely many commuting derivations. 

In \cite{kumbhakar2025strongly}, Kumbhakar and Srinivasan give a higher order analog of Conjecture \ref{ordoneconj}, proving the result for equations of order \emph{at most four.} We next introduce the notation to understand their higher order analog of Conjecture \ref{ordoneconj}. 

Consider the equation
\begin{equation} \label{arbintro} f(x, x' , \ldots x^{(n)}) = 0 
\end{equation} where $f$ is an irreducible polynomial over $k$. Let $y$ be a nonalgebraic solution to equation~\eqref{arbintro}. Let $\mf S_{k,f,y}$ be the set of solutions of (\ref{arbintro}) satisfying: 
\begin{enumerate} 
\item For any $y_1 \in \mf S_{k,f,y}$, there is a differential $k$-isomorphism between the differential fields $k \langle y \rangle$ and $k \langle y_1 \rangle $ that maps $y$ to $y_1$. 
\item For any distinct $y_1, \ldots , y_m \in \mf S_{k,f,y}$, the field of constants of $k \langle y_1 , \ldots , y_m \rangle$ is $C$ and $\text{tr.deg}_k k \langle y_1 , \ldots , y_m \rangle = \sum _{i=1} ^m \text{tr.deg}_k k \langle y_i \rangle $.
%$C=\mc C(k)$. 
%\item For any $y_1 , \ldots , y_2 , \ldots , y_m \in \mf S_{k,f,y}$, $\trdeg _k (k \langle y_1 , \ldots , y_m \rangle) = \sum _{i=1} ^m \trdeg _k (k \langle y_i \rangle ).$
\end{enumerate} 

\begin{rem} 
The above definition is given in \cite{kumbhakar2025strongly}, and there may be some questions around the specific language defining the set  $\mf S_{k,f,y}$, since the implied uniqueness of this set can only be up to isomorphism of differential fields over $k$. We should have perhaps defined $\mf S_{k,f,y}$ simply as a maximal set of independent generic solutions of equation~\eqref{arbintro} for which the field of constants of the differential field over $k$ generated by the solutions is equal to the field of constants of $k$. Concretely, the issue is that if one identifies specific functions in some $F \supset k$, there is not a \emph{unique} maximal set of functions. For instance it would be problematic if $y_1$ were to be a transcendental constant multiple of $y_2$, even though both $y_1$ and $y_2$ themselves could be members of such sets (though not together in the same set). In any case, it is easy to see that the set is well-defined up to $k$-differential isomorphism. 
%{\color{blue}(OLS: could we just say that we take a maximal set of independent generic solutions in the differential closure $\bar k$? Otherwise, we could say a maximal set of independent generic solutions in the monster model (recalling that you only call an element a solution if it adds no new constants.)}
\end{rem}

The result of \cite{kumbhakar2025strongly} is:

\begin{thm} \label{generalizethis}
    Let $k$ be an algebraically closed differential field and $y$ a nonalgebraic solution of \eqref{arbintro}. 
    \begin{enumerate}
        \item $\mf S_{k,f,y}$ is a finite set if and only if there is $k$-irreducible differential subfield $F$ of $k \langle y \rangle$ that can be embedded in a strongly normal extension of $k$ and that either $F= k\langle w \rangle $, where $w$ is a nonalgebraic solution of some Riccati differential equation over $k$, or $F$ is an abelian extension of $k$ in which case $\mf S_{k,f,y}=\{ y \}.$
        \item Suppose that $k= C$. 
        Then, $\mf S_{k,f,y}=\{y\}$ if and only if there is an intermediate differential field $C \subset M \subset C\langle y \rangle $ 
        such that either $M=C(x)$, where 
        $x'=1$ or
        $x'=cx$ 
        for some nonzero $c \in C$, or $M$ is an abelian extension of $C$.
        \item Suppose that $n \leq 4$. Then, $\mf S_{k,f,y}$ is a finite set if and only if it has at most $n+2$ elements. 
    \end{enumerate}
    \end{thm}

In Section \ref{higherorderks}, we provide a proof of part (3) of the above theorem for equations of any order. In fact, after the appropriate translation between the model-theoretic and differential-algebraic languages, this follows from (model-theoretic) results in \cite{freitag2023bounding} which we formulate in line with the more recent \cite{freitag2025finite}. When the field $k$ is a field of constants, using \cite{jaoui2025abelian}, we observe a stronger form with the bound of $1$ in place of $n+2$. The proof again works for systems of partial differential equations in a suitable context. 

In section \ref{optimalks}, we show that over any nonconstant differential field which is (differentially) finitely generated over its constants, one cannot improve the bound of $n+2$ in the above theorem. As in the case of order one, the proof of this optimality is established via the inverse differential Galois theory problem over the field $k$. Though we only need a weak form of the inverse problem, we work over a more general sort of differential field than is usual for the problem over which, for instance, we could not find a suitable reference in the literature. To fill this gap, we have included an appendix by Michael F. Singer solving the inverse problem in this context. 

The results of Kumbhakar and Srinivasan in \cite{kumbhakar2025strongly} are powered by a strong classification result for $k$-irreducible subfields of a strongly normal extension. Some of these results are well-known to model-theorists; in fact, after the appropriate translation, they can be viewed as an instance of recent work of Jaoui and Moosa \cite{jaoui2025abelian}. In Section \ref{abredJM}, we explain this connection, showing how results in \cite{jaoui2025abelian} (restricted to the context of differential fields) yield results of \cite{kumbhakar2025strongly}. We do, however, note that the proofs in \cite{kumbhakar2025strongly} involve interesting and different techniques than the ones in \cite{jaoui2025abelian}.

%In certain ways the results of \cite{kumbhakar2025strongly} are stronger, while in other ways the results of \cite{jaoui2025abelian} are stronger, so the connection strengthens the results of both works. 

\subsection{A note on authorship} 
The authors first considered the conjecture of Kumbhakar, Roy, and Srinivasan at the 2024  Beijing Model Theory Conference, where we outlined the proof contained in Section \ref{ordonesection}. We wrote the details of the proof during a visit of Le\'on S\'anchez to Chicago in May 2025. Subsequently, Kumbhakar and Srinivasan released their own proof of the conjecture in \cite{karhumaki2025primitive} by rather different methods in which they also formulated the variant of the conjecture for higher order equations. 

Kumbhakar and Srinivasan deserve the credit for having the first available proof of Conjecture \ref{ordoneconj}. After this, we wrote the present proof of the higher order conjecture of Kumbhakar and Srinivasan in Section \ref{higherorderks}. We expect that readers who have closely followed the model theoretic developments of the last half-decade around binding groups (e.g. work of the first author with Jaoui, Jimenez and Moosa) can see that the proofs of the conjectures themselves follow relatively quickly from those developments once the connections are pointed out. Part of our goal in writing this note is to make those connections more transparent as well as pointing out some aspects of the optimality of the conjectures. During a Fall 2025 visit of Wei Li to Chicago, we obtained the connection to the work of Jaoui and Moosa contained in Section \ref{abredJM}. 

\subsection{Acknowledgment}  The authors thank Rahim Moosa for helpful comments on an earlier version of this manuscript.

\section{Notions from model theory} \label{prelim}
We begin by recalling some model-theoretic notions specialised to the theory DCF$_{0}$. We work inside a sufficiently saturated model $(\mathcal U,\delta)$ of DCF$_0$ and denote by $\mathcal C$ its subfield of constants. Let $F \supset K$ be an extension of differential fields. We write $a \ind _K F$ if $\tp (a/F)$ is a \emph{nonforking} extension of $\tp( a/K)$. This is equivalent to the equality of the Kolchin polynomials $\omega_{a/K} (t) = \omega_{a/F} (t)$ (see e.g. \cite{PongEmbedding}), which in turn is equivalent to the differential field $K\langle a\rangle$ being algebraically disjoint from $F$ over $K$. Sometimes we write $a \ind _K b$ for $a \ind_K K \langle b \rangle $. In this article, we mainly deal with elements $a$ which satisfy algebraic differential equations over $K$, and so $\omega _{a/K} (t)$ is equal to the transcendence degree of the field generated over $K$ by $a$ and its derivatives; i.e., the order of the minimal differential polynomial of $a$ over $K$. 

We say that two types $p, q $ over $K$ are \emph{weakly orthogonal} over $K$ if for all elements $a, b$ such that $\tp(a/K) = p $ and $\tp (b/K) = q$ we have  $a \ind _K b$. When $p$ and $q$ are weakly orthogonal over $K$, we write $p \perp ^w q.$ We say that two types $p, q$ over $K$ are \emph{orthogonal} if for all $F \supset K$, all non-forking extensions of $p, q$ to $F$, say denoted by $p',q'$ respectively, we have $p' \perp ^w q'$. When $p$ and $q$ are orthogonal, we write $p \perp q$. A sometimes useful perspective to keep in mind when considering the above notions is that nonorthogonality of types $p \not \perp q$ is equivalent to the weak non-orthogonality $p' \not \perp ^w q'$ of some nonforking extensions $p'$ and $q'$ of $p$ and $q$, respectively. 

When $X, Y$ are definable sets (or even type-definable sets) over $K$, we say that $X$ is internal to $Y$ if there is a definable surjection $f: Y^n \rightarrow X$ possibly using additional parameters to those used to defined $X$ and $Y$. The primary case of interest in this notion for this article takes place when $Y$ is taken to be the field of constants $\mathcal C$ and $X$ is taken to be the generic type of a differential equation (in one-variable) over $K$. 

When the definable surjection $f$ is defined over some $F \supset K$, internality is equivalent to there being $c_1, \ldots c_n$ in $Y$ such that the generic solution of $X$ can be written as a rational function of $c_1, \ldots , c_n$ over $F$. In this case, it follows from a general model-theoretic principle (see \cite{GST}, for instance) that the field $F$ 
%over which such an $f$ is defined 
can be generated by finitely many independent generic solutions of the equation $X$. When one replaces \emph{rational} over $F (c_1, \ldots , c_n)$ (or $f$ being definable) in the above definition with \emph{algebraic} over $F(c_1, \ldots , c_n)$ (or generic type of $X$ is algebraic over $F$ and a finite tuple from $Y$), we say that $X$ is \emph{almost internal} to $Y.$\footnote{There is considerable variation of the notation used in the literature around these notions, but we are generally following the notation used in \cite{freitag2023bounding}.} 

In comparing the notions of non-orthogonality and internality above, the reader will have noticed that both notions allow for an extension of parameters, while weak non-orthogonality is more sensitive and does not allow for such an extension. We will define next, the analog of weak non-orthogonality, but for the notion of internality - this is not a standard definition in model theory, but will be convenient for our purposes. Later in the paper, this notion will be related to the notion of an equation being of \emph{algebraic type}.

\begin{defn}

Let $X, Y$ be $K$-definable sets (or even partial types over $K$). 
%with $X$ internal to $Y$. 
We say that $X$ is \emph{$K$-definably internal to} $Y$ if there is a $K$-definable surjection $f: Y^n \rightarrow X$. We similarly define $X$ is \emph{$K$-definably almost internal to} $Y$ if any element of $X$ is algebraic over $K\langle c_1, \ldots c_d\rangle$ for finitely many realizations $c_1, \ldots c_d$ of $Y$. 
%is $K$-definable. 
\end{defn}

When a type $p$ over $K$ is $\mc C$-internal, the group of automorphisms of $p$ fixing $K$ and $\mc C,$ denoted $Aut(p/K \mc C)$ has the structure of a definable group, which model theorists call the binding group. This group of automorphisms is identical in this case to the group considered in Kolchin's strongly normal extensions. In general, this group acts definably on the type $p$; and in \cite{freitag2023bounding} the transitivity properties of this action were related to the problem of understanding weak non-orthogonality of the type $p$ to $\mc C$. 

\begin{defn} \cite[Definition 5.3]{freitag2023bounding}
    A definable action of a definable group $G$ of finite Morley rank on a definable set $S$ of finite Morley rank is \emph{generically $k$-transitive} if the diagonal action of $G$ on $S^k$ has an orbit $\mc O$ such that $RM(S^k \setminus \mc O) < RM(S^k)$. 
\end{defn}

For the rest of the section, let $G$ be the binding group of $X$ over $K$, where $X$ is the set of solutions of a $\mc C$-internal order $n$  differential equation. Let $p$ be the generic type of $X$ over $K$. In this case, the action of the group $G$ on $X$ is definably isomorphic %(over some differential field extension) 
to the constant points of an algebraic group action on an algebraic variety. 

\begin{lem} \label{gentransortho}
The action of $G$ on $X$ is generically $d$-transitive if and only if $p^{(d)}$ is weakly orthogonal to $\mc C$. 
\end{lem}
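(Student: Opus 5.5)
The plan is to reduce both sides to a statement about a single realization $\bar a=(a_1,\dots,a_d)$ of $p^{(d)}$, i.e. $d$ independent realizations of $p$ over $K$. The bridge is the fact that the orbits of $G=Aut(p/K\mc C)$ on $X^d$ are exactly the sets of realizations of complete types over $K\mc C$, restricted to $X^d$. This holds because $p$ is $\mc C$-internal, so $\mc C$ is stably embedded and types over $K\mc C$ are determined by the binding group action. With this in hand, generic $d$-transitivity says the following: there is an orbit $\mc O\subseteq X^d$ whose complement has smaller Morley rank than $X^d$. Equivalently, since $X^d$ has Morley degree one (as $p$ is the generic type of $X$, hence stationary), the generic type $p^{(d)}$ has a unique extension to $K\mc C$ that remains of maximal rank. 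The key steps below check this equivalence directly.

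($\Leftarrow$) Assume $p^{(d)}\perp^w \mc C$. Then for any tuple $c$ from $\mc C$, $\bar a\ind_K c$, so $\tp(\bar a/K\mc C)$ is determined by $p^{(d)}$ alone. Any two realizations of $p^{(d)}$ therefore have the same type over $K\mc C$ and lie in one $G$-orbit $\mc O$. The set of realizations of $p^{(d)}$ inside $X^d$ is exactly the set of generic points, and the non-generic points form a set of smaller Morley rank (a union of proper Kolchin-closed subsets, of lower rank). Hence $RM(X^d\setminus\mc O)<RM(X^d)$.

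($\Rightarrow$) Assume there is such an orbit $\mc O$. Since $X^d$ has Morley degree one and $\mc O$ has full rank, every generic realization of $p^{(d)}$ lies in $\mc O$. (If some realization lay outside, then the complement of $\mc O$ would contain a point of maximal rank over $K$, contradicting the rank bound.) Hence all realizations of $p^{(d)}$ have the same type over $K\mc C$. Now suppose $\bar a\not\ind_K c$ for some $c$ in $\mc C$. Take $\bar a'\models p^{(d)}$ with $\bar a'\ind_K c$, which exists by taking a nonforking extension to $K\langle c\rangle$. Then $\tp(\bar a/Kc)\neq\tp(\bar a'/Kc)$, since they have different Kolchin polynomials or transcendence degree over $K\langle c\rangle$. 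This contradicts the uniqueness of the type over $K\mc C$, so $p^{(d)}\perp^w\mc C$.

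The main obstacle is the identification of $G$-orbits on $X^d$ with types over $K\mc C$. The containment one gets for free is that orbits lie inside types; the converse requires that any two tuples with the same type over $K\mc C$ are conjugate by an automorphism fixing $K\mc C$ that restricts to an element of $G$. I would invoke the standard fact for $\mc C$-internal types that $\tp(\bar a/K\mc C)$ is isolated by $\tp(\bar a/K\,\mc C\cap\dcl(K\bar a\ldots))$, together with saturation and stable embeddedness of $\mc C$ (see \cite{GST}). I would state this as a preliminary observation before running the two directions above. A secondary point is the use of Morley degree one of $X^d$, which follows since $p$ is the unique generic type of the (irreducible) equation.
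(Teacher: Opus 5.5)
\textbf{Verdict: there is a genuine gap in your ($\Rightarrow$) direction; ($\Leftarrow$) is fine.}

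Your ($\Leftarrow$) is essentially Proposition 2.3 of \cite{freitag2022any}, which the paper simply cites. To make ``the non-generic points have smaller rank'' precise, use compactness. $X^d\setminus\mathcal O$ is definable and misses $p^{(d)}(\mathcal U)$, so it lies inside a $K$-definable set that misses $p^{(d)}$, and such a set has smaller rank.

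The gap is the claim that every realization of $p^{(d)}$ lies in $\mathcal O$. Your justification is that otherwise $X^d\setminus\mathcal O$ would contain a point of maximal rank over $K$, but this is no contradiction. $X^d\setminus\mathcal O$ is only known to be definable over $K$ plus extra parameters (a point of $\mathcal O$, or some constants). A tuple of maximal rank over $K$ can perfectly well lie in a set of smaller rank defined over parameters it forks with. In fact, a realization of $p^{(d)}$ outside $\mathcal O$ would be exactly one that forks with some constants. So the step presupposes the weak orthogonality you are trying to prove. Morley degree one of $X^d$ does not help. Also, your reformulation in the first paragraph (that $p^{(d)}$ has a unique extension to $K\mathcal C$ of maximal rank) holds for every stationary type, so it does not restate generic transitivity.

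\textbf{The missing idea is that the big orbit is $K$-definable.}
\begin{enumerate}
\item Every orbit other than $\mathcal O$ lies in $X^d\setminus\mathcal O$, so $\mathcal O$ is the unique orbit of full rank.
\item Each $\sigma\in\mathrm{Aut}(\mathcal U/K)$ fixes $\mathcal C$ setwise, so it normalizes $\mathrm{Aut}(\mathcal U/K\mathcal C)$. Hence $\sigma$ permutes the $G$-orbits and preserves their Morley rank, so $\sigma(\mathcal O)=\mathcal O$.
\item Being definable and $\mathrm{Aut}(\mathcal U/K)$-invariant, $\mathcal O$ is $K$-definable.
\item So $X^d\setminus\mathcal O$ is a $K$-definable set of smaller rank, and it contains no realization of $p^{(d)}$.
\end{enumerate}
With this inserted, the rest of your ($\Rightarrow$) argument is correct.

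The repaired argument takes a different route from the paper's. The paper argues contrapositively: it turns a witness of $p^{(d)}\not\perp^w\mathcal C$ into a nonconstant $K$-definable map $f\colon X^d\to\mathcal C$ whose fibres are $G$-invariant. Every orbit is then confined to a single fibre, which rules out an orbit whose complement has smaller rank. Your route needs only the easy half of the orbit/type correspondence here (same orbit implies same type over $K\mathcal C$). The price is the invariance argument above.
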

\begin{proof} The right-to-left direction is Proposition 2.3 of \cite{freitag2022any}. The left-to-right direction follows because if $p^{(d)}$ is not weakly orthogonal to $\mc C$, then there is a non-constant definable map $X^d \rightarrow \mc C$. The fibers of $f$ are (being $K (\mc C)$-definable) fixed by the action of $G$ and are dense in $X^d$.
\end{proof}

There is an a priori bound on the degree of generic transitivity that a group action can have, and this in turn leads to a bound on weak nonorthogonality of independent realizations of generic tuples of solutions (see \cite{freitag2023bounding, freitag2025finite} for more). This bound is a special case of the Borovik-Cherlin conjecture, a major open problem in the model theory of groups of finite Morley rank. 

\begin{thm} \cite{freitag2023bounding} \label{BCforACF0} When $RM(X) = d$ and $G$ acts generically $k$-transitively, then $k \leq d+2$. When $k=d+2$, the action $(G,X)$ is definably isomorphic to the natural action of $PSL_{d+1} (\mc C)$ on $\m P^d ( \mc C)$. 
\end{thm}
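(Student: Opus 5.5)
We prove Theorem \ref{BCforACF0} by transferring to algebraic geometry over the algebraically closed field $\mc C$ and then classifying highly generically transitive algebraic group actions.

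\textbf{Step 1: reduce to an algebraic group action.} As noted before Lemma \ref{gentransortho}, the action of $G$ on $X$ is definably isomorphic to the action of $H(\mc C)$ on $V(\mc C)$, where $H$ is a connected algebraic group over $\mc C$ and $V$ is an irreducible variety of dimension $d$. We may pass to the connected component, since it has finite index and generic $k$-transitivity concerns only the generic orbit. Morley rank in the pure field $\mc C$ equals Zariski dimension, so generic $k$-transitivity means that $H$ has a Zariski-dense orbit on $V^k$. Replacing $H$ by its image, we may assume the action is faithful.

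\textbf{Step 2: reduce to a simple group.} A dense orbit on $V^k$ forces $\dim H \ge kd$. Let $R$ be the solvable radical of $H$. Orbits of $R$ are permuted by $H$, so generic $2$-transitivity (note $k \ge 2$ in the relevant range) forces $R$ either to act trivially or to be generically transitive on $V$. In the latter case a standard argument shows $R$ is abelian and acts generically regularly; then the point stabiliser is a complement acting linearly, and one bounds $k \le d+1$ by the affine case $\mathrm{AGL}_d$. In the semisimple case, generic $2$-transitivity forces primitivity, so $H$ is simple modulo its centre.

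\textbf{Step 3: stabiliser argument (main obstacle).} For a generic point $x \in V$, the stabiliser $H_x$ acts generically $(k-1)$-transitively on $V$. Linearising at $x$, it acts on $T_xV \cong \mc C^d$ with a dense orbit on $(T_xV)^{k-1}$, after a suitable blowup or projectivisation. A linear group acting on $\mathbb A^d$ with a dense orbit on $d$-tuples has, at the next step, a dense orbit on $(\mathbb P^{d-1})^{?}$ only through $\mathrm{GL}_d$-type scaling. This yields the bound $k-1 \le d+1$, and the extremal case forces $H_x$ to induce the full $\mathrm{GL}_d$ on $T_xV$, with unipotent radical of dimension $d$. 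Then $\dim H = d^2+2d$, and $H_x$ is a maximal parabolic with abelian unipotent radical acting by the standard representation. The classification of parabolics of simple groups singles out $\mathrm{SL}_{d+1}$ with $P$ the stabiliser of a line, so $V$ is birational to $H/P = \m P^d$, with the action generically that of $\mathrm{PSL}_{d+1}$.

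The hardest step is justifying the linearisation bound and the identification $V \cong \m P^d$ definably rather than only generically. For that identification I would use homogeneity: the dense orbit is $H/H_x \cong \m P^d$, and it is definably isomorphic to $X$ up to a lower-rank set. I expect to rely on Knop's classification of multiply transitive algebraic group actions ("Mehrfach transitive Operationen algebraischer Gruppen"), which is exactly this statement for algebraic groups in characteristic zero, and to cite it for the final identification.
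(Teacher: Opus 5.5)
The paper gives no proof of this statement; it quotes it from \cite{freitag2023bounding}, where it is the algebraic, characteristic-zero case of the Borovik--Cherlin conjecture. Your argument therefore has to stand on its own, and there is a genuine gap at the decisive Step~3: generic $(k-1)$-transitivity of $H_x$ on $V$ does not pass to the isotropy action.

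\emph{On $(T_xV)^{k-1}$.} This already fails for $(\mathrm{PSL}_{d+1},\m P^d)$. There $H_x$ is generically $(d+1)$-transitive on $\m P^d$. But its unipotent radical, of dimension $d$, acts trivially on $T_x\m P^d$, so the image of $H_x$ in $\mathrm{GL}(T_x\m P^d)$ is $\mathrm{GL}_d$, of dimension $d^2<d(d+1)$, and has no dense orbit on $(T_x\m P^d)^{d+1}$.

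\emph{On $\m P(T_xV)$.} This fails too. $\mathrm{SL}_4$ is generically $3$-transitive on $\mathrm{Gr}(2,4)$, since pairwise complementary triples of planes form one orbit. So $H_x$ is generically $2$-transitive on $V$. Yet its image in $\mathrm{PGL}(T_xV)$ is $M\mapsto aMb^{-1}$ on $2\times 2$ matrices, which preserves the Segre quadric $\{\det=0\}\subset\m P^3$. Hence it has a nonconstant invariant on pairs, namely $B(M,N)^2/(\det M\det N)$ with $B$ the polarisation of $\det$. For $\m P^d$, projectivising works only because of the equivariant map sending $y$ to the line $xy$.

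The sentence with $(\m P^{d-1})^{?}$ therefore yields neither $k\le d+2$ nor the parabolicity of $H_x$ on which your extremal analysis rests. Knop's theorem cannot fill this. It classifies genuinely $2$-transitive actions (transitive on pairs of distinct points), which is far more restrictive: $\mathrm{Sp}_{2m}$ on $\m P^{2m-1}$ is generically $2$-transitive but not $2$-transitive. The generic statement is exactly what \cite{freitag2023bounding} had to prove.

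Step~2 also needs repair. First, the solvable radical of $\mathrm{AGL}_d$ is $\m G_a^d\rtimes\m G_m$, which is neither abelian nor regular. Work instead with a nontrivial connected abelian normal subgroup $A$ of the unipotent radical, for instance the last nontrivial term of its derived series:
\begin{itemize}
\item primitivity makes $A$ generically transitive;
\item commutativity makes it regular on the open orbit;
\item the centraliser of $A$ in $G_x$ is trivial.
\end{itemize}
Hence $G=A\rtimes G_x$ with $G_x\hookrightarrow\mathrm{Aut}(A)\cong\mathrm{GL}_d$, so $kd\le d^2+d$. Second, generic $2$-transitivity does not force primitivity: $\mathrm{SL}_m$ ($m\ge 3$) on $\m A^m\setminus\{0\}$ fibres equivariantly over $\m P^{m-1}$. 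You must first reduce to primitive actions by induction on $d$, since an equivariant fibration onto $Y$ with $0<\dim Y<d$ gives $k\le\dim Y+2\le d+1$.

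The missing idea in the semisimple primitive case is then as follows. The stabiliser is a maximal connected subgroup, hence parabolic or reductive. A reductive $H$ is excluded, because $\mc C[G/H]$ then contains both $W$ and $W^*$ for some nontrivial $W$, which gives a nonconstant $G$-invariant on $X\times X$. For $H=P$ parabolic with Levi factor $L$, one has $\dim G=\dim L+2d$, while $L$ acts on $T_xX\cong\mf g/\mf p$ with central kernel. Hence $kd\le\dim G\le d^2+2d$. Equality forces $L$ to induce all of $\mathrm{GL}_d$, which forces type $A_d$ and $X\cong\m P^d$. This dimension count, not a dense-orbit statement on tangent spaces, is what carries the bound.
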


\section{Order one differential equations} \label{ordonesection}

We first repeat the classification of  order one differential equations given in \cite[Page 581]{kumbhakar2024classification}. Here $k$ is a differential field having field of constants $\mc C_k$. We will assume, in order to make certain statements simpler, that $k = k^{alg}$. In each of the results, this is not required, but if it is not assumed, at various places one may need to replace $k$ by a finite extension $\hat k$. This approach without the assumption that $k=k^{alg}$ is taken in \cite{kumbhakar2024classification}, but the approaches are easily seen to be equivalent. 

\begin{defn} \label{classification}
    Let $X$ be the solution set of $f(y,y')=0$, an irreducible differential equation of order one over $k$.
    \begin{itemize}
        
        \item[(I)] We say $X$ is of \emph{algebraic type} over $K$ if all solutions of $X$ in any differential field $F$ extending $k$ and having $\mc C_F = \mc C_k$ are algebraic over $k$. 
        
        \item[(II)] We say that $X$ is of \emph{Riccati type} if any generic solution $x$ of $X$, in a differential field extension with constants $\mc C_k$, is algebraic over a solution $u$ of a Riccati equation: $$x'=a_2 x^2 + a_1 x + a_0$$ with coefficients $a_i \in k=k^{alg}$ which are not all zero, that is $x\in k(u)^{alg}$. %{\color{purple} (In their paper, rather than using the larger field $K^{alg}$, they use a finite algebraic extension $\tilde{K}$. Shall we keep their original statement, or just use algebraic closure?){\color{blue}I think it's fine since for us $\tilde{K}=K(a_o,a_1,a_2)$.}}
        
        \item[(III)] We say that $X$ is of \emph{Weierstrass type} if any generic solution $x$ of $X$, in a differential field extension with constants $\mc C_k$, is algebraic over a solution $u$ of a Weierstrass equation:
        $$(x')^2 = \alpha ^2 (4x^3 - g_2 x -g_3)$$ where $g_2, g_3 \in \mc C_k$ such that $27g_3^2 - g_2^3 \neq 0$ and $\alpha \in k=k^{alg}$ is nonzero, that is $x\in k(u)^{alg}$. 
        \item[(IV)] The differential equation $X$ is of \emph{general type} if it is not of one of the above types. 
    \end{itemize}
\end{defn}

%{\color{blue}(OLS: are we assuming from now on that $X$ is an order one equation? If so, we should say so here.)}

Unless otherwise stated, in this section $X$ denotes the solution set of an irreducible order one differential equation $f(y,y')=0$ over $k$. Note that then $X$ is strongly minimal. We now provide a characterization of being of algebraic type.

\begin{lem} \label{equivofnotionsinorderone}
    %Let $f(y,y')=0$ be an order one (irreducible) differential equation  over $k$ and let $X$ be its set of solutions. Then, 
    The following are equivalent:
    \begin{enumerate}
    \item $X$ is of algebraic type.
        \item The generic type of $X$ is not weakly orthogonal to $\mc C$ (over $k$).
        \item $X$ is $k$-definably almost internal to $\mc C$.
        \item There exists a differential rational map $g:X\to \mathcal C$ over $k$ whose image is dominant.
    \end{enumerate}
\end{lem}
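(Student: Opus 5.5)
I would prove the cycle of implications (1)$\Rightarrow$(2)$\Rightarrow$(4)$\Rightarrow$(3)$\Rightarrow$(1), working throughout in $\mathcal U$ with $k=k^{alg}$ and using that $X$ is strongly minimal, so its generic type $p$ has rank $1$.

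\emph{(1)$\Rightarrow$(2).} Let $a\models p$. By strong minimality, $a\notin k^{alg}=k$. Let $F=k\langle a\rangle^{\diff}$, which we take inside $\mathcal U$. If $\mathcal C_F=\mathcal C_k$, then $F$ is an extension of $k$ with no new constants containing a nonalgebraic solution, contradicting algebraic type. So there is a constant $c\in F\setminus \mathcal C_k$. Since $F$ is atomic over $k\langle a\rangle$, $c$ is algebraic (in the model-theoretic sense) over $k\langle a\rangle$. As $\mathcal C_k=\mathcal C\cap k$ is algebraically closed, $c$ is transcendental over $k$. Because $\mathcal C$ is stably embedded and $\acl=$ field-theoretic algebraic closure, some constant in $k\langle a\rangle$ itself is new. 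Concretely, the coefficients of the minimal polynomial of $c$ over $k\langle a\rangle$ are fixed by all automorphisms fixing $k\langle a\rangle$; these automorphisms preserve $\mathcal C$, so we may instead take a symmetric function of the conjugates of $c$, which is again a constant. Thus $k\langle a\rangle$ contains a new constant $e$, so $\operatorname{tr.deg}(e/k)=1$ while $\operatorname{tr.deg}(e/k\langle a\rangle)=0$. Hence $a\not\ind_k e$, and $p$ is not weakly orthogonal to $\mathcal C$.

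\emph{(2)$\Rightarrow$(4).} Weak nonorthogonality gives $a\models p$ and a tuple $c$ from $\mathcal C$ with $a\not\ind_k c$. Since $\operatorname{RM}(p)=1$, $a\in k(c)^{alg}$, so $c\not\ind_k a$ as well. Hence $\mathcal C\cap k\langle a\rangle^{alg}\not\subseteq k^{alg}$. Using that $k\langle a\rangle=k(a,a')$ and the symmetric-function trick above, there is $e\in k\langle a\rangle\cap\mathcal C$ with $e\notin k$. Write $e=g(a)$ with $g$ a differential rational function over $k$. Then $g$ restricted to $X$ maps into $\mathcal C$, and its image is infinite, hence generic, i.e.\ dominant in $\mathcal C$. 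We remove the indeterminacy locus by noting that it is a finite subset of the rank one set $X$.

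\emph{(4)$\Rightarrow$(3).} The generic fibres of $g$ are $k(c)$-definable proper subsets of the strongly minimal set $X$, hence finite. So $a\in k\langle g(a)\rangle^{alg}$ for generic $a$, and the finitely many nongeneric points are already in $k^{alg}$.

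\emph{(3)$\Rightarrow$(1).} Let $F\supseteq k$ with $\mathcal C_F=\mathcal C_k$ and $a\in F\cap X$; embed $F$ in $\mathcal U$. By (3), $a\in k(c_1,\dots,c_d)^{alg}$ for some constants $c_i$. We want $a\in k^{alg}$. Suppose not. Then $a$ is generic, so the constants $\mathcal C\cap k\langle a\rangle^{alg}$ properly contain $\mathcal C_k$. The symmetric-function argument again yields a new constant in $k\langle a\rangle\subseteq F$, contradicting $\mathcal C_F=\mathcal C_k$.

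The main obstacle is the recurring step of passing from a new constant algebraic over $k\langle a\rangle$ to one lying in $k\langle a\rangle$. The approach is to use the fact that the constants of $k\langle a\rangle^{alg}$ are algebraic over those of $k\langle a\rangle$, which holds because $\mathcal C(L^{alg})=\mathcal C(L)^{alg}$ for differential fields $L$. Applied to $L=k\langle a\rangle$, this gives a transcendental constant in $k\langle a\rangle$ whenever there is one in its algebraic closure.
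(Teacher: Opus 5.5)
Your proof is correct in substance and uses the same ingredients as the paper's; the difference is organizational. The paper routes every condition through one statement: $k\langle a\rangle^{alg}$ contains a constant outside $\mathcal C_k$ (i.e.\ $k(a)^{alg}=k(c)^{alg}$ for a new constant $c$). For (4) it then pulls such a constant down into $k\langle a\rangle$ using $\mathcal C_{L^{alg}}=\mathcal C_L^{alg}$. Your cycle (1)$\Rightarrow$(2)$\Rightarrow$(4)$\Rightarrow$(3)$\Rightarrow$(1) passes through that same statement at each step. It also supplies details the paper leaves implicit: finiteness of generic fibres of $g$ by strong minimality, the finitely many nongeneric points, and the indeterminacy locus.

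One justification in (1)$\Rightarrow$(2) is wrong as written. Atomicity of $k\langle a\rangle^{\diff}$ over $k\langle a\rangle$ only makes $\tp(c/k\langle a\rangle)$ isolated, not algebraic, and a differential closure typically contains many elements transcendental over the base. For a constant $c$ the conclusion happens to hold, but for a different reason: a constant transcendental over $L$ realizes the generic type of $\mathcal C$ over $L$, which is not isolated because $\mathcal C_L$ is infinite. The detour is unnecessary anyway. Apply the definition of algebraic type to $F=k\langle a\rangle$ itself: since $a\notin k$, this field already contains a constant outside $\mathcal C_k$, and no symmetric-function step is needed in this direction.

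Also, in (2)$\Rightarrow$(4) and (3)$\Rightarrow$(1), the real content is the step from the failure of $c\ind_k a$ (for a tuple $c$ of constants) to $\mathcal C\cap k\langle a\rangle^{alg}\neq\mathcal C_k$. This follows from the linear disjointness of a differential field $L$ and $\mathcal C$ over $\mathcal C_L$, which gives $\operatorname{tr.deg}(c/L)=\operatorname{tr.deg}(c/\mathcal C_L)$. The paper takes this for granted too, but you should cite it at those steps rather than invoking stable embeddedness in (1)$\Rightarrow$(2), where it is not needed.
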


\begin{proof}
Let $a$ be a generic solution of $X$. The generic type of $X$ is weakly non-orthogonal to $\mc C$ over $k$ if and only if there exist 
%a generic solution $x$ of $X$ and 
a constant $c$ in $\mc C_{k\langle a\rangle^{alg}}\backslash\mc C_k$; in other words, $k\langle a\rangle^{alg}=k(a)^{alg}=K(c)^{alg}$. Note that the latter is equivalent to $X$ being $k$-definably almost internal to $\mc C$. This shows $(2)\Leftrightarrow (3)$. Furthermore, as any transcendental solution is generic, the equality $k(a)^{alg}=k(c)^{alg}$ is equivalent to $X$ being of algebraic type, showing $(2)\Leftrightarrow (1)$. For $(2)\Leftrightarrow (4)$, simply note that, since $\mathcal C_k$ is algebraically closed (as we are assuming $k$ is), if there is $c\in \mc C_{k\langle a\rangle^{alg}}\backslash\mc C_k$, then there must be $d\in \mc C_{k\langle a\rangle}\backslash\mc C_k$, and this induces a differential rational function $X\to \mathcal C$ over $k$.

\begin{comment}
Note that $X$ being of algebraic type is equivalent to the function field of $f$, $K_f=\text{Frac}(K[y,y']/(f(y,y')))$, which is differentially isomorphic to $K\langle x\rangle$ for $x$ any generic solution of $X$, contains a new constant\footnote{This result was first proved in \cite[Theorem 2.8]{vanderput2015}. Here, we directly use our model theoretic language to give a simple proof.}. 
Indeed, if $X$ is of algebraic type, as any generic solution is a transcedental solution, the function field $K_f$ of $f$ contains a new constant; on the other hand, if $\mathcal C_{K_f}\neq \mathcal C_K$, then $X$
is weakly nonorthogonal to $\mathcal C$ over $K$. 
Let $a\in X(F)$ with $F$ a differential field extension of $K$ with $C_F=C_K$. %If $a\in X(K^{alg})$, then we are done. 
For contradiction, assume that $a$ is $K$-generic.  By the weakly non-orthogonality, there exists $c\in \mc C_{K\langle a\rangle^{alg}}\backslash\mc C_K$ with $K(c)^{alg}=K(a)^{alg}$. So $c\in \mathcal C_{K(a)^{alg}}\subset  \mathcal C_{F^{alg}}=\mathcal C_K$. This is a contradiction and thus, $a\in X(K^{alg})$ and $X$ is of algebraic type.   
As $\mc C_K$ is algebraically closed, for a generic solution  $x\in X(\mathcal U)$, $C_{K\langle x\rangle^{alg}}=(C_{K\langle x\rangle})^{alg}$, so  $C_{K\langle x\rangle}\backslash\mc C_K\neq \emptyset$  is equivalent to $C_{K\langle x\rangle^{alg}}\backslash\mc C_K\neq \emptyset$.
\end{comment}
\end{proof}

%{\color{red} OLS: The previous lemma had a longer proof. I commented it out. Also, there was another lemma that I incorporated as point (4) in the above lemma (the old lemma is in the file, just commented out.}

\begin{comment} 
\begin{lem} \label{weakorthogonewC}
Weak non-orthogonality of the generic type of $X$ to $\mc C$ over $K$ is equivalent to the existence of a $K$-differential rational map $g: X \rightarrow \mc C$ over $K$ whose image is dominant. 
\end{lem}

\begin{proof} This argument is essentially contained in the second paragraph of \cite[Proposition 2.3]{freitag2022any}. 
Let $p$ be the generic type of $X$. It follows by using elimination of imaginaries for the structure $\mc C$ that for $a \models p$, there is a constant $c \in \mc C$ such that
$c \in dcl (Ka) \setminus acl(K)$, but as $a \in X$ is generic over $K$, this induces a nonconstant $K$ differential rational map $X \rightarrow \mc C$.
\end{proof}
\end{comment}

Conjecture~\ref{ordoneconj} will follow from the following theorem (or rather its proof).
\begin{thm} \label{characterizinggentype}
Let $p$ be the generic type of $X$ over $k$. The following are equivalent:
\begin{enumerate}
\item $X$ is of general type. 
\item the differential field generated by any tuple of  realisations of $p$ has constants $\mc C_k$. 
%{(\color{red} Do we require independent realisations of $p$ ?)}
\item $p \perp \mc C$. %where $p$ is the type of a generic solution to $X$.  
\item $p$ is trivial (in the model theoretic sense).
\end{enumerate}
\end{thm}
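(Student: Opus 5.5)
The strategy is to prove $(3)\Leftrightarrow(4)$ and $(3)\Leftrightarrow(2)$ from general facts about strongly minimal sets in DCF$_0$. The real content is $(1)\Leftrightarrow(3)$, which I would handle through the structure of $\mc C$-internal strongly minimal types. Since $X$ is strongly minimal, $p$ has rank one. By the Zilber trichotomy for DCF$_0$ (Hrushovski--Sokolovi\'c), $p$ is either trivial, nonorthogonal to the generic type of a simple abelian variety $A$ not descending to $\mc C$ (i.e.\ to its Manin kernel $A^\sharp$), or nonorthogonal to $\mc C$. Manin kernels are locally modular and nontrivial, and $\mc C$ is non-locally modular. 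Hence $p$ is trivial if and only if $p\perp\mc C$ and $p\perp A^\sharp$ for every such $A$. For order one equations the second case cannot occur: a Manin kernel of a non-isotrivial simple abelian variety has order at least $2\dim A\geq 2$. A rank one order one type nonorthogonal to it would be interalgebraic with a rank one order $\ge 2$ type, and this is impossible since order (the Kolchin polynomial) is preserved under interalgebraicity. This gives $(3)\Leftrightarrow(4)$.

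For $(3)\Rightarrow(2)$, suppose a tuple $a_1,\dots,a_m$ of realisations of $p$ generates a new constant $c\notin\mc C_k$. Choose a minimal subtuple that is independent with $c\in k\langle a_1,\dots,a_m\rangle^{alg}$; non-algebraic realisations of a strongly minimal type are either independent or interalgebraic, so this is possible. Then $\tp(a_m/k\langle a_1,\dots,a_{m-1}\rangle)$ is a nonforking extension of $p$ that is weakly nonorthogonal to $\mc C$, which contradicts $p\perp\mc C$. For $(2)\Rightarrow(3)$, suppose $p\not\perp\mc C$. Since $p$ is stationary, nonorthogonality to $\mc C$ is witnessed over parameters $k\langle a_1,\dots,a_{r}\rangle$ for independent realisations, by the standard fact that the witnessing field can be taken generated by realisations of $p$. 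Then $p^{(r+1)}$ is weakly nonorthogonal to $\mc C$, so $k\langle a_1,\dots,a_{r+1}\rangle$ contains a new constant by Lemma~\ref{equivofnotionsinorderone}(4) applied in the appropriate power. Alternatively, $p$ is then almost $\mc C$-internal, and Theorem~\ref{BCforACF0} with $d=1$ shows that $p^{(4)}$ (after passing to the internal quotient) is not weakly orthogonal to $\mc C$, which gives an explicit bound.

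For $(1)\Leftrightarrow(3)$: if $X$ is of algebraic, Riccati or Weierstrass type, then a generic solution is interalgebraic over $k$ with a generic solution of an equation that is visibly nonorthogonal to $\mc C$. The Riccati equation is $\mc C$-internal with binding group in $PSL_2$. The Weierstrass equation with constant $g_2,g_3$ is a twist of an elliptic curve defined over $\mc C$, hence internal after adjoining one solution. So $p\not\perp\mc C$. Conversely, if $p\not\perp\mc C$ then $p$ is almost $\mc C$-internal, and replacing $p$ by a $\mc C$-internal type interalgebraic with it, the binding group $G$ acts faithfully and transitively on a rank one set. By the classification of such actions in rank one (the case $d=1$ underlying Theorem~\ref{BCforACF0}), $G$ is the group of $\mc C$-points of $PSL_2$, an affine group $\mathbb G_a\rtimes\mathbb G_m$ or a subgroup, or a one-dimensional algebraic group (additive, multiplicative, or an elliptic curve $E$ over $\mc C$). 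In the $PSL_2$, affine, $\mathbb G_a$ and $\mathbb G_m$ cases the equation becomes, after a $k$-definable change of variables, a Riccati equation; in the elliptic case it becomes a Weierstrass equation with an $\alpha\in k$. If $G$ is trivial we are in the algebraic type case.

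I expect the main obstacle to be this last step, because the change of variables must be defined over $k$ rather than over extra parameters. I would handle it by using that $k=k^{alg}$ and that principal homogeneous spaces and twisted forms of these groups over $k$ can be trivialised algebraically ($H^1$ considerations, together with the fact that every genus zero curve over an algebraically closed field is rational). This turns the logarithmic-derivative description of the torsor into an explicit Riccati or Weierstrass equation over $k$.
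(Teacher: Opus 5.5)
Your proposal is correct and follows essentially the paper's route: (2)$\Leftrightarrow$(3) by passing to an independent subtuple and using that nonorthogonality to $\mc C$ is witnessed by some $p^{(m)}$; (3)$\Leftrightarrow$(4) by Hrushovski--Sokolovi\'c, where your order count ruling out Manin kernels makes the paper's one-line appeal explicit; and (1)$\Leftrightarrow$(3) via almost internality, an interalgebraic internal replacement, Hrushovski's classification of transitive actions on strongly minimal sets, and normal forms over $k=k^{alg}$, which the paper cites from Jaoui--Moosa (Prop.~6.5) and Kolchin rather than deriving from torsor triviality as you sketch. One small fix: transitivity of the binding group action needs $p\perp^w\mc C$, so, as the paper does, first dispose of the case $p\not\perp^w\mc C$ directly by Lemma~\ref{equivofnotionsinorderone} (algebraic type) rather than folding it into ``$G$ trivial'' after having asserted transitivity.
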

\begin{proof}
For $(3)\Rightarrow (2)$, note that if $a_1,\dots,a_n$ are realisations of $p$, then, since $X$ is of order one, there is a sub-sequence $b_1,\dots,b_m$ such that the $b_i$'s are independent realisations and $acl(a_1,\dots,a_n)=acl(b_1,\dots,b_m)$. But since $p\perp \mc C_k$, we have $p^{(m)}\perp ^w \mc C_k$; in particular, $acl(b_1,\dots,b_m)\cap \mc C=\mc C_k$, which implies (2). On the other hand, if $p \not \perp \mc C$, by say \cite[Lemma 4.3.1]{GST} or \cite{hrushovski1989almost}, there is (finite) tuple $a=(a_1,\dots,a_n)$ of (independent) realisations of $p$ such that $a\not \perp ^w \mc C$ 
%{\color{red} (independent realisations of $p$ witnessing $p^{(n)}\not \perp ^w \mc C$?)}
; in other words the differential field generated by $a$ over $k$ has a constant not in $\mc C_k$.

Since $p$ is order one, the classification of strongly minimal types in DCF$_0$ \cite{HrushovskiSokolovic} implies $(3)\Leftrightarrow (4)$. 

In case $X$ is not of general type, then $p$ must be almost internal to $\mathcal C$ (and hence nonorthogonal to $\mc C$). Indeed, if $X$ is of algebraic type we deploy Lemma~\ref{equivofnotionsinorderone}, and if $X$ is of Ricatti or Weierstrass type then $p$ is strongly normal in the Kolchin sense by \cite{kolchin1953galois}, and hence internal to the constants.

Now assume $p$ is nonorthogonal to $\mc C$. If $p \not \perp ^w \mc C$, then Lemma \ref{equivofnotionsinorderone} yields that $X$ is of algebraic type. Thus we can further assume that $p \perp ^w \mc C$. Then using a similar argument as in the proof of Lemma \ref{equivofnotionsinorderone}, we get $p$ is almost internal to $\mc C$. Using Lemma 3.6 of \cite{jin2020internality} we get that $p$ is interalgebraic with a type which is $\mc C$-internal, but the cases of II and III in Definition \ref{classification} of non-general type equations allow for interalgebraicity, so we can replace $p$ with this interalgebraic type. So, without loss of generality, we assume that $p$ is itself $\mc C$-internal. 

It is well-known that in this case ($p$ is $\mc C$-internal and weakly $\mc C$-orthogonal) the action of the binding group $G$ on $X$ is transitive - this follows, for instance from \cite[Lemma 2.2]{freitag2023bounding}. From this point, we use the classification of group actions on strongly minimal sets, where each case essentially involves finding a differential rational\footnote{The expression is even differential birational as the expression is interdefinable with the generic solution of $X$.} expression of the generic solution of $X$ which satisfies an equation of a very particular form relating to the group $G$, what Kolchin \cite[VI.9]{KolchinDAAG} calls the a full logarithmic-differential equation of $G$ over $K$. It is a result of Hrushovski \cite{hrushovski1989almost} that the action of $G$ on $X$ is definably isomorphic to one of five group actions: 
\begin{enumerate}
    \item The action of $\m G_a ( \mc C) $ on $\m A^1 ( \mc C)$ by translation. 
    \item The action of $\m G _m ( \mc C) $ on $\m A^1 ( \mc C ) \setminus \{0 \}$ by multiplication. 
    \item The action of the affine group $Aff_1 ( \mc C)$ %{\color{red}OLS: Shouldn't this be $Aff_1(\mathcal C)$, unless the notation differs from standard notation, note that $Aff_n$ has dimension $n^2+n$} 
    on $\m A^1 ( \mc C)$ by affine transformation. 
    \item The action of $PGL_2 ( \mc C) $ on $\m P^1 ( \mc C)$ by linear fractional transformation. 
    \item The regular action of an elliptic curve on itself. 
\end{enumerate}

The details of each of the cases might be found in various places, but Proposition 6.5 of \cite{jaoui2025abelian} gives the details of the first four cases, which correspond to Ricatti type. The details of the last case, which correspond to Weierstrass type, can be found in \cite[\S III.8]{kolchin1953galois} after noting that 
%by Fact 6.4 of \cite{jaoui2025abelian} 
since the binding group is commutative, $p$ is itself fundamental (i.e., the action of the binding group is regular, as is the case for any faithful transitive commutative group action). Hence $X$ is not of general type.
\end{proof}

We note that there are numerous potential paths to proving the previous theorem. For instance, it seems likely that the result of Jaoui and Moosa \cite{jaoui2025abelian} was classically known in many cases, but we don't know another reference which covers our specific situation. 

\begin{rem}
From Theorem \ref{characterizinggentype} alone, one can deduce, for instance that any order one equation whose generic type is orthogonal to $\mc C$ provides a positive answer to the question of \cite[beginning of section 7]{noordman2022autonomous}; namely, is there a $D$-algebraic element which is not $D^n$-finite for any $n\geq 1$? See also \cite[Section 1.2]{kumbhakar2024classification}, where the question is discussed. %{\color{blue}(OLS: should we add the question?)}. 
The observation of \cite{kumbhakar2024classification} uses the difference in the structure of algebraic relations between solutions of an equation of general type and a $D$-finite equation, an idea also exploited in \cite[section 7]{noordman2022autonomous} to give an example of a $D$-algebraic, but not $D$-finite function. The examples of \cite{noordman2022autonomous} and  \cite{kumbhakar2024classification} are order one equations of general type (e.g.  \cite{noordman2022autonomous} uses an example given by Rosenlicht \cite{notmin}). Additional very early examples of equations of general type can be found in \cite{shelah1973differentially} and \cite{kolchin1974constrained}, where they were used in a similar capacity to that of \cite{notmin}. Each of these equations has the property that the type of a generic solution over the field of definition is orthogonal to the constants - as we have seen in order one, this property corresponds to being of general type. Many additional examples of higher order equations have now been shown to be orthogonal to $\mc C$. For instance, the $j$-function \cite{freitag2017strong}, more generally uniformizing functions of genus zero Fuchsian groups \cite{casale2020ax}, and 
Painlev\'e equations \cite{nagloo2017algebraic} each give examples of higher order equations answering the question posed in \cite{noordman2022autonomous}.   
\end{rem}

We now prove Conjecture \ref{ordoneconj}:
\begin{thm}\label{proofconj}
   An order one differential equation over $k$ (respectively, an autonomous differential equation over $\mc C_k$) is not of general type if and only if it has at most three (respectively, at most one) algebraically independent solutions in any given differential field extension of $k$ (respectively, $\mc C_k$) having $\mc C_k$ as its field of constants. 
\end{thm}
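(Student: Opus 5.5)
We may work inside $k^{\diff}$, the differential closure of $k$ (with $k=k^{alg}$). It has constants $\mc C_k$, and every differential field extension of $k$ with constants $\mc C_k$ generated by solutions of $X$ embeds into $k^{\diff}$ over $k$ (more precisely, into the differential closure of $k$, which realizes the relevant types). So both directions reduce to counting independent generic solutions of $X$ inside $k^{\diff}$ and in fields with no new constants.

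\emph{General type $\Rightarrow$ infinitely many.} Suppose $X$ is of general type. By Theorem~\ref{characterizinggentype}, $p\perp \mc C$, and the differential field generated by any tuple of realisations of $p$ has constants $\mc C_k$. We then build $a_1,a_2,\dots$ with $a_{i+1}$ realising the nonforking extension of $p$ to $k\langle a_1,\dots,a_i\rangle$. By $(2)$ of that theorem, $k\langle a_1,a_2,\dots\rangle$ has constants $\mc C_k$, and the $a_i$ are algebraically independent over $k$. This gives a field with infinitely many, hence more than three (resp.\ one), independent solutions. In the autonomous case this is the same argument over $\mc C_k$.

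\emph{Not general type $\Rightarrow$ bounded.} Suppose $X$ is not of general type. If it is of algebraic type, no transcendental solution exists in any field with constants $\mc C_k$, so the count is $0$. Otherwise, as in the proof of Theorem~\ref{characterizinggentype}, $p$ is weakly orthogonal to $\mc C$ and, after replacing it by an interalgebraic type (which preserves the number of independent solutions), $\mc C$-internal with binding group $G$ acting transitively on a rank-one set $X$. Let $a_1,\dots,a_m$ be independent solutions in a field with constants $\mc C_k$. Then $p^{(m)}$ is weakly orthogonal to $\mc C$, since a new constant in $k\langle a_1,\dots,a_m\rangle$ would be forbidden and $p^{(m)}$ is complete by stationarity. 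By Lemma~\ref{gentransortho}, the action is generically $m$-transitive, and Theorem~\ref{BCforACF0} with $d=1$ gives $m\le 3$. For autonomous equations over $\mc C_k$ the binding group must be more restricted, and we aim for $m\le 1$. One might try to argue that for $m=2$ there would be a new constant, from a $\mc C_k$-definable function of two solutions invariant under $G$. I am not sure this is the right mechanism, and I expect the autonomous bound to be the main obstacle. My plan there is to exploit autonomy directly: if $y(t)$ is a solution, then for a generic $a$ the translate $y(t+c)$ with $c$ behaves like "$t$", and $a'/f(a)$-type integrals give $\int dy/g(y)$. An autonomous order one equation $y'=g(y)$ (up to algebraic covers) has $\int dy/g$ as a primitive, so two independent solutions $a,b$ give $\int^a - \int^b$ with derivative $0$. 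That element is a constant in the differential field generated when the differential $dy/g$ is exact on the curve, namely in the $\m G_a$ case. In the $\m G_m$ and elliptic cases, the analogous ratio or difference under the group law of the logarithmic-derivative solution is a new constant. I would verify, via Hrushovski's list, that every non-general-type autonomous case has $G$ commutative (cases (1), (2), (5)) acting regularly. Then two independent realisations produce the constant $a\cdot b^{-1}$ in $G$, defined over $\mc C_k$ because the equation is autonomous, so $p^{(2)}\not\perp^w\mc C$.

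\emph{Converse.} Conversely, the upper bounds force non-general-type, by the first direction. The remaining risk is justifying in the autonomous case that the Riccati possibilities $Aff_1$ and $PGL_2$ cannot occur; over constants the logarithmic-derivative equation has constant coefficients, which should force commutativity.
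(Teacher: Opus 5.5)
Your general-type direction is the paper's. Your bound of three is a legitimate variant: you combine Lemma~\ref{gentransortho} with Theorem~\ref{BCforACF0} for $d=1$, while the paper bounds the number of independent solutions by $\dim G$ and reads off $\dim G\le 3$ from Hrushovski's five cases. (Your opening reduction to $k^{\diff}$ is unnecessary, and it can fail when the generic type is not isolated over $k$. Nothing later depends on it.)

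The genuine gap is the autonomous bound of one, which you flag yourself. Everything there rests on the binding group $G$ being commutative when the equation is defined over $\mc C_k$, and you give no argument for this.
\begin{itemize}
\item Inspecting Hrushovski's list cannot exclude $Aff_1$ and $PGL_2$, since the list only says which actions are possible.
\item ``Constant coefficients should force commutativity'' is the claim itself, not an argument for it.
\item The $\int dy/g$ heuristic does not supply it either. The would-be constant $\int^a dy/g-\int^b dy/g$ lies in $k\langle a,b\rangle$ only in special cases; for Rosenlicht's autonomous general-type equation $y'=y^3-y^2$ no new constant ever appears. Deciding when it does lie there is the whole classification problem.
\item In your last step, the element $g\in G$ with $ga=b$ lies in $\dcl(k\mc C)$ because $G$ is commutative, not because the equation is autonomous: an automorphism fixing $k\mc C$ acts on $G$ by conjugation by an element of $G$.
\end{itemize}

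The paper supplies the missing input by citing Fact~6.1 of \cite{jaoui2025abelian}: over constant parameters the binding group is commutative. Classically, a strongly normal extension of an algebraically closed field of constants is generated by a solution of $\delta(g)g^{-1}=A$ with $A$ a constant element of the Lie algebra. Its Galois group is (up to conjugacy) the smallest algebraic subgroup whose Lie algebra contains $A$, and that group is commutative. With commutativity, only cases (1), (2), (5) survive, so $\dim G=1$. A one-dimensional group cannot act generically $2$-transitively on a rank-one set, so Lemma~\ref{gentransortho} gives $p^{(2)}\not\perp^w\mc C$, i.e.\ at most one independent solution.
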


\begin{proof} 
If $X$ is of general type, then Theorem~\ref{characterizinggentype} implies that any tuple (of any length) of independent realisations of $p$ will generate a differential subfield with constants $\mc C_k$. 

On the other hand, if $X$ is not of general type, the proof of Theorem~\ref{characterizinggentype} yields that the number of independent realisations in any differential field extension with constants $\mc C_k$ is bounded by the dimension of the binding group and that this dimension is bounded by three (by going over the 5 cases displayed in that proof). Moreover, when $k=\mc C_k$, by Fact~6.1 of \cite{jaoui2025abelian} we get that the binding group is commutative and hence it has dimension one (by looking at cases (1), (2) and (5) in the proof of Theorem~\ref{characterizinggentype}).

%Every order one equation is either almost $\mc C$-internal or is $\mc C$-orthogonal. The result then follows from Corollary \ref{almostint} in the almost internal case together with Theorem \ref{characterizinggentype} and Lemma \ref{gentp} in the $\mc C$-orthogonal case. 
\end{proof}

\subsection{Improvements to the conjecture}
There are two natural ways in which we see how one might improve the conjecture. The first is a refinement in the case of general type equations, while the second extends the conjecture to the case of partial differential equations. 

More can be said with regard to the general type case of Conjecture \ref{ordoneconj} using a result of Hrushovski (cf. \cite[Corollary 1.82]{pillay2002}) which was latter generalized by Freitag and Moosa to the partial differential setting (cf. \cite[Theorem 6.2]{freitag2017finiteness}):

\begin{thm} \label{catone}
%Suppose $X$ is a strongly minimal order one definable set that is orthogonal to $\mc C$. Then $X$ is $\aleph_0$-categorical. 
If $X$ is of general type, then $X$ is $\aleph_0$-categorical.
\end{thm}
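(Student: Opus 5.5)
The plan is to reduce to the known structure theory of strongly minimal sets in DCF$_0$, via Theorem~\ref{characterizinggentype}. Since $X$ has order one, it is strongly minimal. If $X$ is of general type, then Theorem~\ref{characterizinggentype} shows that its generic type $p$ is orthogonal to $\mc C$, and equivalently that it is trivial. By the Hrushovski--Sokolovi\'c trichotomy, we may then use the following fact: a strongly minimal set in DCF$_0$ that is orthogonal to $\mc C$ is either trivial or nonorthogonal to the Manin kernel of a simple abelian variety. Triviality of $p$ already excludes the modular nongroup-like alternative. So we are in the trivial, non-locally-modular-free situation that Hrushovski's result (\cite[Corollary 1.82]{pillay2002}) addresses: a trivial strongly minimal set of order one is $\aleph_0$-categorical. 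The proof would therefore cite that result, after checking that its hypotheses are exactly ``order one'' plus ``orthogonal to $\mc C$''.

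If one wants to see where the hypotheses are used, the internal argument runs in four steps.

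\textbf{Step 1.} Take $a\in X$ generic over $k$. Because $p\perp\mc C$ and $X$ is trivial, $\acl$ on $X$ (over $k$, and over any parameters) is disintegrated: $\acl(a_1,\dots,a_n)\cap X=\bigcup_i\acl(a_i)\cap X$.

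\textbf{Step 2.} Bound the fibres. For a generic $a$, the set $\acl(ka)\cap X$ is finite. The reason is that in order one, if $b\in\acl(ka)$ is generic, then $k(a)^{alg}=k(b)^{alg}$. The nonalgebraic solutions algebraic over $a$ then correspond to points of a one-dimensional function field that are interalgebraic with $a$ and satisfy the same equation. Finiteness of this set is the key input; Hrushovski obtains it by a genus/Riemann--Hurwitz-type argument, namely that a trivial order-one equation has only finitely many algebraic correspondences with itself. Together with Step 1, this shows that $n$-types of elements of $X$ over $k$ are determined by pairwise data together with the finitely many algebraic solutions over $k$.

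\textbf{Step 3.} Count types. A complete type of a tuple from $X$ is determined by which coordinates are algebraic over $k$ (only $k^{alg}\cap X$, which is finite), by which coordinates are interalgebraic (finitely many options, by Step 2), and otherwise by independence. Hence there are finitely many $n$-types for each $n$. Ryll-Nardzewski then gives $\aleph_0$-categoricity of the induced structure on $X$.

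\textbf{Step 4 (main obstacle).} Step 2, the finiteness of $\acl(ka)\cap X$, is the genuinely hard part. It is not formal from triviality; it needs the order-one geometry. I would simply invoke \cite[Corollary 1.82]{pillay2002} at this point rather than reprove it.
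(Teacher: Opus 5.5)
Your main line is the same as the paper's. Theorem~\ref{characterizinggentype} turns ``general type'' into $p\perp\mathcal{C}$ (equivalently, triviality), and the conclusion is then exactly Hrushovski's theorem for order-one sets orthogonal to the constants, \cite[Corollary 1.82]{pillay2002}, which the paper simply cites. As a citation-based proof this is correct.

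Your optional ``internal argument'' has two mistakes, so do not rely on it.

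\emph{First}, $\acl(k)\cap X=X(k)$ need not be finite. The paper only assumes $k=k^{alg}$. If $k$ is differentially closed, or contains infinitely many independent generic solutions, then $X(k)$ is infinite while $X$ is still of general type (e.g.\ Rosenlicht's $y'=y^3-y^2$). In that situation the full induced structure on $X$ has infinitely many algebraic $1$-types. So Step~2 should claim finiteness only of $(\acl(ka)\cap X)\setminus X(k)$. Step~3 then counts types of tuples of \emph{nonalgebraic} solutions only, i.e.\ it gives $\aleph_0$-categoricity of $p(\mathcal{U})$. This is the sense in which the paper uses the result: its algebraic reformulation speaks only of solutions not algebraic over $k$.

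\emph{Second}, the finiteness in Step~2 is not a genus or Riemann--Hurwitz phenomenon. For any curve $V$, the surface $V\times V$ contains infinitely many irreducible curves dominating both factors. So finiteness of self-correspondences cannot come from the algebraic geometry of $V$ alone; it must come from invariance under the differential structure. The paper's $j$-function example is a trivial, $\mathcal{C}$-orthogonal equation whose generic solutions are linked by algebraic relations of unbounded degree. That example shows triviality and orthogonality alone do not suffice, and the order-one hypothesis must enter differentially. The input actually used is Hrushovski's generalization of Jouanolou's theorem: finitely many invariant hypersurfaces in the absence of a nonconstant rational first integral. This is the result \cite{freitag2017finiteness} extends to the partial case. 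Orthogonality to $\mathcal{C}$ supplies the absence of first integrals, and order one is what makes the relevant invariant correspondences codimension one.
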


A restatement of the theorem in purely algebraic terms is as follows: The condition of $\aleph_0$-categoricity in this setting is equivalent to there being a specific fixed (algebraic) polynomial $p(x,y)$ over $k$ such that if $a_1, \ldots , a_n$ are solutions of $X$ which are algebraically dependent over $k$, then for some pair of solutions $a_i, a_j$, we have $p(a_i,a_j)=0$. The equivalence relation for solutions of $X$ which are zeros of $p$ is then a $k$-definable equivalence relation, and so by elimination of imaginaries, there is a surjective differential rational map $g:X \rightarrow Y$ over $k$ for some order one differential equation $Y$ such that two elements of $X$ are algebraically dependent if and only if they have the same image in $Y$. The equation $Y$ has the property that if $b_1, \ldots , b_n$ are distinct solutions of $Y$ which are not algebraic over $k$, then $b_1, \ldots , b_n$ are algebraically independent over $k$. 

%{\color{red} OLS: I added the following corollary.} 
We can now state and prove the version of Conjecture~\ref{ordoneconj} that we suggested in the introduction as the \emph{existential version} of the main result of \cite{freitag2022any}.

\begin{cor}
    Let $k^{\diff}$ be the differential closure of $k$. If there exists four algebraically independent solutions of $X$ in $k^{\diff}$, then there exists infinitely many algebraically independent solutions in $k^{\diff}$. When $k=\mc C_k$, four can be replaced by two.
\end{cor}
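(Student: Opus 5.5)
The argument is a contrapositive combined with the dichotomy in Theorem~\ref{characterizinggentype}. Since $k^{\diff}$ is a differential field extension of $k$ whose constant field is $\mc C_k$, it lies inside the scope of Theorem~\ref{proofconj}. So suppose $X$ has four algebraically independent solutions in $k^{\diff}$ (respectively two, when $k=\mc C_k$). By Theorem~\ref{proofconj}, $X$ cannot fail to be of general type, because otherwise it would have at most three (respectively one) algebraically independent solutions in any extension with constants $\mc C_k$. Hence $X$ is of general type, and it remains to produce infinitely many algebraically independent solutions inside $k^{\diff}$.

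I would do this inductively, in the following steps.

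\textbf{Step 1: the type is isolated.} Let $p$ be the generic type of $X$ over $k$. We know that $X(k^{\diff})$ contains a non-algebraic solution; by hypothesis it even contains four independent ones. Because $k^{\diff}$ is prime over $k$, every type over $k$ realised in $k^{\diff}$ is isolated, so $p$ is isolated.

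\textbf{Step 2: the nonforking extensions are isolated.} Suppose $a_1,\dots,a_m\in X(k^{\diff})$ are algebraically independent generic solutions, and set $F=k\langle a_1,\dots,a_m\rangle^{alg}\subseteq k^{\diff}$. I claim the nonforking extension $p_F$ of $p$ to $F$ is isolated. For this I would use Theorem~\ref{catone}, so that $X$ is $\aleph_0$-categorical, together with the triviality of $p$ from Theorem~\ref{characterizinggentype}(4). Triviality and $\aleph_0$-categoricity mean that the only algebraic relations among solutions are the finitely many $k$-definable binary relations coming from the polynomial $p(x,y)$ discussed after Theorem~\ref{catone}. Consequently $p_F$ is axiomatised by "$x\in X$, $x$ not in the finite set $X\cap\acl(k)$, and $x$ not algebraically related via that fixed relation to any $a_i$". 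Since $X\cap\acl(F)$ is finite, I expect this to be a single formula over $F$, so $p_F$ is isolated.

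\textbf{Step 3: realising it in $k^{\diff}$.} The field $k^{\diff}$ is also the differential closure of $F$, hence prime over $F$, so it realises the isolated type $p_F$. This yields $a_{m+1}$ independent from $a_1,\dots,a_m$. Iterating gives infinitely many algebraically independent solutions.

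The main obstacle is Step 2: making precise that the generic type over $\acl(k a_1\dots a_m)$ is isolated. The cleanest route is to observe that $\aleph_0$-categoricity makes $X(k^{\diff})$ an infinite set, since $p$ is realised there and the fibres of the equivalence relation are finite. Then $X(k^{\diff})$ cannot be contained in the finite set $\acl(F)\cap X$, and any element of $X(k^{\diff})$ outside that set is a new independent solution, by the strong minimality of $X$ and the triviality of $p$. So the key point is to show that $X(k^{\diff})$ is infinite. For this I would use that, over the algebraically closed $F$, the formula isolating the non-algebraic part of $X$ is consistent and $k^{\diff}\models$ DCF$_0$-closure over $F$, which gives infinitely many realisations.
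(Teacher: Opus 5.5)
Your argument is correct, and up to the last step it is the paper's. Theorem~\ref{proofconj} forces $X$ to be of general type (since $k^{\diff}$ has constants $\mc C_k$), and Theorem~\ref{catone} supplies finiteness of algebraic relations.

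The routes differ only in how you then produce infinitely many independent solutions. The paper uses elimination of imaginaries to pass to the quotient $g\colon X\to Y$ by algebraic dependence. There, distinct solutions outside $k$ are automatically independent, so it just picks many points of $Y(k^{\diff})\setminus k$. You instead stay in $X$ and extend an independent family one element at a time:
\begin{itemize}
\item Set $F=k\langle a_1,\dots,a_m\rangle^{alg}$. Triviality puts each $b\in (X\cap F)\setminus k$ in $\acl(ka_i)$ for some $i$, hence among the finitely many solutions related to $a_i$ by the fixed polynomial.
\item So once $X(k)$ is finite, $X\cap F$ is finite and $x\in X\wedge x\notin X\cap F$ is a consistent formula over $F$.
\item A realisation exists in $k^{\diff}$ simply because $k^{\diff}\preceq\mc U$; primeness over $F$ is not needed. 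Any such realisation is transcendental over $F$, i.e.\ a new independent solution.
\end{itemize}
This avoids the quotient and elimination of imaginaries, at the cost of using triviality explicitly.

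Two things in your write-up should be corrected.

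\emph{Where the difficulty lies.} Your last paragraph misplaces it. That $X(k^{\diff})$ is infinite is immediate from elementarity. It does not follow from ``$p$ is realised there and the fibres are finite.'' The real content is the finiteness of $X\cap F$.

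\emph{Finiteness of $X(k)$.} The finiteness of $X\cap\acl(k)=X(k)$ does not come from Theorem~\ref{catone}, which only controls relations among solutions outside $k$. If $k$ already contains infinitely many independent generic solutions of an equation defined over a smaller field, then $X(k)$ is infinite while $X$ is still of general type over $k$. The finiteness follows instead from your Step~1: a formula over $k$ isolating $p$ defines an infinite, hence cofinite, subset of the strongly minimal set $X$ that contains no point of $k$. The paper's proof tacitly needs the same fact for $Y$, to get infinitely many points of $Y(k^{\diff})\setminus k$.
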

\begin{proof}
    Suppose there are more than four independent solutions in $k^{\diff}$. Then, Theorem~\ref{proofconj} implies that $X$ is of general type. Theorem~\ref{catone} then implies that $X$ is $\aleph_0$-categorical, and by the comments above we can find $Y$ defined by an order one differential equation such that if $b_1, \ldots, b_n$ are distinct solutions of $Y$ which are not algebraic over $k$, then $b_1, \ldots , b_n$ are algebraically independent over $k$. Since $k^{\diff}$ is differentially closed but not equal to $k$, we can find arbitrarily many distinct solution in $Y(k^{\diff})$ that are not in $k=k^{alg}$.
\end{proof}

None of the results used in the model-theoretic proof of Conjecture \ref{ordoneconj} or the extension of the above paragraphs depend closely on working in the setting of ordinary differential equations. Each of the results remains true in the case of partial differential equations with commuting derivations in which the constant field $\mc C$ is taken to be the intersection of the field of constants of each of the commuting derivations. Here the relevant model theory is that of the theory DCF$_{0,m}$ \cite{McGrail}. 

The results of \cite{freitag2017finiteness}, \cite{freitag2023bounding}, and \cite{freitag2025finite} each apply in this more general setting, yielding the same results when the hypothesis is that the $\Delta = \{\delta_1, \ldots , \delta_m \}$-function field generated by the generic solution of $X$ is of transcendence degree one. There is a close dependence on the transcendence degree of this function field being one - as we show later in the paper, results for higher order equations must necessarily be weaker.

\begin{thm} \label{orderonecategoricityresult}
   Let $X$ be the set of solutions of a system of $\Delta$-equations over an algebraically closed $\Delta$-field $k$ such that if $a$ is a generic solution to the system then $\text{tr.deg}_k k \langle a \rangle=1$. Let $\mc C_k=\mc C_k^{alg}$ be the kernel of $\Delta$. Then, either 
   \begin{itemize}
       \item $X$ is almost $\mc C$-internal and has at most three algebraically independent solutions in any $\Delta$-field $F$ extending $K$ such that $\mc C_F = \mc C_k$, or 
       \item there is a $\Delta$-field extension $F$, with $\mc C_F = \mc C_k$, containing infinitely many algebraically independent solutions of $X$.
       %over $K$ in fields $F$ with $\mc C_F = \mc C_k$.
   \end{itemize} 
In the second case, any collection of solutions $a_1, \ldots , a_n$ of $X$ satisfies an algebraic relation over $k$ if and only if some pair $a_i, a_j$ satisfies an algebraic relation over $k$. In fact, there is a single polynomial $p(x,y)$ depending on $X$ such that when the pair satisfies an algebraic relation over $k$, we must have $p(a_i, a_j)=0$. It follows that the $\Delta$-field $F$ can be taken to be the $\Delta$-closure of $k$.
\end{thm}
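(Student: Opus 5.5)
Let $p$ be the generic type of $X$ over $k$. Since $\text{tr.deg}_k k\langle a\rangle=1$, the type $p$ has $U$-rank (indeed Morley rank) one in DCF$_{0,m}$. I will mimic the proofs of Theorems~\ref{characterizinggentype} and \ref{proofconj}, replacing the ordinary inputs by their DCF$_{0,m}$ analogues, and split into cases according to whether $p\perp\mc C$.

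\emph{Case $p\not\perp\mc C$.} The rank-one dichotomy (nonorthogonality to the rank-one set $\mc C$ implies almost internality) gives that $p$ is almost $\mc C$-internal. If $p\not\perp^w\mc C$, then $k\langle a\rangle$ contains a constant outside $\mc C_k$, as in Lemma~\ref{equivofnotionsinorderone}. Hence every solution in an $F$ with $\mc C_F=\mc C_k$ is algebraic over $k$, and there are zero independent solutions.

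Otherwise, by \cite[Lemma 3.6]{jin2020internality} (valid in this generality), $p$ is interalgebraic with a $\mc C$-internal type $q$ that is weakly orthogonal to $\mc C$. Interalgebraicity preserves the number of algebraically independent solutions in fields with constants $\mc C_k$. The binding group $G$ of $q$ then acts transitively on a rank one set, and by Hrushovski's classification it is one of the five actions listed in the proof of Theorem~\ref{characterizinggentype}, so $\dim G\le 3$. If $b_1,\dots,b_n$ are independent realisations of $q$ in $F$ with $\mc C_F=\mc C_k$, then $q^{(n)}\perp^w\mc C$. By Lemma~\ref{gentransortho}, $G$ then acts generically $n$-transitively, so $n\le\dim G\le 3$ (equivalently, $n\le RM+2=3$ by Theorem~\ref{BCforACF0}). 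Conversely, the bound is realised up to $\dim G$ inside the prime model.

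\emph{Case $p\perp\mc C$.} For every $n$ we have $p^{(n)}\perp^w\mc C$, so $n$ independent realisations generate a $\Delta$-field with constants $\mc C_k$. Compactness, or taking all realisations of a Morley sequence, yields a field $F$ with infinitely many independent solutions and no new constants. To get the finer statement, I invoke \cite[Theorem 6.2]{freitag2017finiteness}: a rank one type orthogonal to $\mc C$ is trivial and $\aleph_0$-categorical, i.e.\ Theorem~\ref{catone} in the partial setting. Triviality gives that algebraic dependence among $a_1,\dots,a_n$ reduces to a pairwise dependence. $\aleph_0$-categoricity gives that $\acl(a)\cap X$ is finite uniformly, so a single polynomial $p(x,y)$ over $k$ witnesses all pairwise dependences. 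Via elimination of imaginaries, the quotient map $g:X\to Y$ identifies pairwise-dependent solutions, and nonalgebraic solutions of $Y$ are pairwise, hence fully, independent. Since the $\Delta$-closure $k^{\diff}$ is a proper extension, it contains infinitely many nonalgebraic points of $Y$. Pulling these back gives infinitely many independent solutions of $X$ in $k^{\diff}$, whose constants are $\mc C_k$.

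The main obstacle I expect is checking that the ingredients cited from the ordinary setting transfer to DCF$_{0,m}$: Hrushovski's classification of transitive group actions on rank-one sets (which is really a statement about algebraic group actions in ACF$_0$ on $\mc C$, so should transfer directly), and the Jin–Moosa interalgebraicity lemma. I would handle these by observing that each concerns only the induced structure on $\mc C$, which is a pure ACF$_0$ in DCF$_{0,m}$ as well.
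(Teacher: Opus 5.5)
Your treatment of the dichotomy is correct and is exactly the paper's route of transferring the order-one arguments to DCF$_{0,m}$. In the almost internal case you use Jin--Moosa, Hrushovski's classification and Lemma~\ref{gentransortho}. In the case $p\perp\mc C$ you use triviality and $\aleph_0$-categoricity from \cite{freitag2017finiteness}, together with the elimination-of-imaginaries quotient $Y$.

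The gap is your last step: ``since $k^{\diff}$ is a proper extension, it contains infinitely many nonalgebraic points of $Y$.'' The $\Delta$-closure is atomic over $k$. So it contains a point of $Y$ transcendental over $k$ only if the generic type of $Y$ (equivalently, of $X$) over $k$ is isolated, that is, only if $X$ has finitely many solutions in $k$. Whether $k^{\diff}\neq k$ is beside the point.

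This isolation can fail in the second case:
\begin{itemize}
\item Take $k$ differentially closed, for instance $k=\mc C_0^{\diff}$ with $\mc C_0$ an algebraically closed field of constants, and let $X$ be Rosenlicht's equation $y'=y^3-y^2$. Then $p\perp\mc C$, yet $k^{\diff}=k$ contains no solution transcendental over $k$.
\item For any algebraically closed $k\supseteq\mc C_0^{\diff}$, the set $X(k)\supseteq X(\mc C_0^{\diff})$ is infinite. So the generic type is not isolated and $k^{\diff}$ contains no generic solution, even when $k^{\diff}\neq k$.
\end{itemize}
For the same reason, your claim that $\acl(ka)\cap X$ is finite uniformly has to be read modulo $k$. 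Likewise, the single polynomial $p(x,y)$ only controls pairs of nonalgebraic solutions.

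So the final sentence of the statement needs an extra hypothesis. A suitable one is that $X$ has at least one solution in $k^{\diff}$ transcendental over $k$, equivalently that $X$ has only finitely many solutions in $k$. This is exactly what the hypothesis of the Corollary in Section~\ref{ordonesection} supplies in the ordinary case. The paper's sketch of the $\Delta$-version reuses that argument without the hypothesis, so it shares this flaw.

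Under this hypothesis your argument does close:
\begin{itemize}
\item A formula isolating the generic type of $Y$ has infinitely many realisations in $k^{\diff}\preceq\mc U$.
\item These are distinct nonalgebraic points of $Y$, hence pairwise independent, and so fully independent by triviality.
\item Lifting them along $g$ inside $k^{\diff}$ gives infinitely many algebraically independent solutions of $X$ there.
\end{itemize}
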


\section{On the optimality of the conjecture of Kumbhakar, Roy, and Srinivasan} \label{orderoneoptimal}
In Conjecture \ref{ordoneconj} above, the bound three is replaced by one in the autonomous case (the case in which the equation has constant coefficients). One might inquire: can the bound of three be improved for the class of order one differential equations over \emph{any differential field other than a constant field?} We show that the answer is no, at least when the differential field $k$
%has finite transcendence degree 
is differentially finitely generated over its field of constants. 

\begin{rem}
    We note that when $k$ is PV-closed the bound in Conjecture~\ref{ordoneconj} is at most one, as in this case being of nongeneral type implies being of algebraic or Weierstrass type. Similarly, when $k$ is differentially closed the bound is at most zero. Of course, in either of these cases $k$ is not differentially finitely generated over its constants.
\end{rem}

Over any differential field $k$ with algebraically closed field of constants such that there is an order two linear equation $L(x)=0$ whose differential Galois group is $GL_2$ (or $SL_2$), the associated Riccati equation (image of the $L(x)=0$ under the map $x \mapsto x'/x$) has binding group $PGL_2=PSL_2$ and the action of the binding group on this Riccati equation is isomorphic to the action of $PGL_2$ acting on $\m P ^1$. This action is $3$-transitive, and so any $3$ solutions are independent, however the equation is not of general type. Any fourth solution is given by a rational expression of the first three and an arbitrary constant, see \cite{nagloo2019algebraic} or \cite[4.3]{freitag2023bounding}.

Mitschi and Singer \cite{MitschiSinger1996} solved the inverse problem for differential Galois theory for connected linear algebraic groups over any nonconstant differential field of finite transcendence degree, proving: 

\begin{thm}
    Let $C$ be an algebraically closed field of characteristic zero, $G$ a connected linear algebraic group over $C$, and $k$ a differential field containing $C$ as its field of constants and of finite nonzero transcendence degree over $C$. Then, $G$ can be realized as the Galois group of a Picard-Vessiot extension of $k$. 
\end{thm}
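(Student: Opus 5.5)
The plan is to reduce to an explicit family of first-order systems $Y'=AY$ with $A\in \mathfrak g(k)$, where $\mathfrak g=\mathrm{Lie}(G)$, and then choose $A$ so that the Galois group is forced to be all of $G$. For such a system the Galois group is automatically contained in $G$, so the whole difficulty is the lower bound.

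\emph{Normalisation.} Since $k$ has nonzero transcendence degree over $C$ and $C$ is the full field of constants, there is $t\in k$ with $t'\neq 0$. After rescaling the derivation by $1/t'$, which changes neither Picard--Vessiot extensions nor their Galois groups, we may assume $t'=1$. We will use coefficients in $C[t]\subset k$.

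\emph{Reductive case.} First assume $G$ is reductive. A connected reductive group is topologically generated by two elements, so $\mathfrak g$ is generated as a Lie algebra by two elements. Choose generic $A_0,A_1\in\mathfrak g(C)$, for instance a regular semisimple element together with an element whose components in all root spaces are nonzero, and set $A=A_0+tA_1$. Let $H\subseteq G$ be the Galois group. By the Kolchin--Kovacic reduction theorem there is $g\in G(\bar k)$ with
\[
gAg^{-1}+g'g^{-1}\in \mathrm{Lie}(H)(\bar k).
\]
The key step is to exploit the polynomial shape of $A$ in $t$ and the algebraic independence of $t$ over $C$ to force $g$ to be, essentially, constant. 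Concretely, we would compare degrees in $t$, or work in a Puiseux/Laurent expansion at $t=\infty$ inside a suitable completion. This yields $A_0,A_1\in\mathrm{Lie}(gHg^{-1})$ for some $g\in G(C)$. Since $A_0,A_1$ generate $\mathfrak g$, we get $H^0=G$, and connectedness of $G$ then gives $H=G$.

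\emph{General case.} Write $G=R_u\rtimes P$ with $P$ reductive (Levi decomposition). Realise $P$ as above by some $A_P$, and take $A=A_P+N$ with $N\in\mathrm{Lie}(R_u)(k)$. The Galois group $H$ surjects onto $P$, and $H\cap R_u$ is a $P$-stable subgroup. To show $H\cap R_u=R_u$, it suffices to show that $H$ maps onto $R_u/[R_u,R_u]$, which is a $P$-module; the general statement then follows by a Frattini-type argument for unipotent groups. This reduces to a linear condition: $N$ must not be gauge-equivalent, via an element of $\mathrm{Lie}$ of a proper $P$-submodule, to a $P$-equivariant correction. We would arrange this by choosing the components of $N$ to have high, distinct degrees in $t$, so that for each irreducible $P$-summand $W$ of $R_u/[R_u,R_u]$ the induced inhomogeneous equation $w'=\rho(A_P)w+n_W$ has no solution in $W(\bar k)$. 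This is Kovacic's approach.

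\emph{Main obstacle.} I expect the hardest step to be the ``constancy of the gauge transformation'' in the reductive case. Over a general $k$, as opposed to $C(t)$, algebraic functions of the remaining transcendental generators of $k$ could a priori produce a gauge transformation over $\bar k$ that is not visible over $C(t)$. My first attempt would be to prove that the Picard--Vessiot extension of $C(t)$ attached to $A$ is linearly disjoint from $k$ over $C(t)$, or at least that its Galois group does not drop on base change to $k$. For this I would use that $k$ is finitely generated over $C(t)$ with constants $C$, and then choose $A_0,A_1$ outside a suitable countable union of proper closed subsets, relying on the uncountability or genericity of the coefficients. If that fails, the fallback is to specialise $k$ to a field of the form $C(t)(\text{algebraic})$ and argue that the Galois group can only drop under specialisation.
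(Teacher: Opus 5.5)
\textbf{Gap 1: the passage from $C(t)$ to $k$.} Your proposal has a genuine gap exactly where you flag it, and neither suggested repair works.

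No single equation can serve every $k$. If $F_A$ is the Picard--Vessiot extension of $C(t)$ for $Y'=AY$, then $k=F_A$ has finite nonzero transcendence degree over $C$ and constants $C$, yet $Y'=AY$ has trivial Galois group over it. So $A$ must depend on $k$.

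Your device for this is to take $A_0,A_1$ off a countable union of proper closed subsets. You give no argument that the bad set has that form. The device is also vacuous when $C$ is countable, e.g.\ $C=\overline{\mathbb{Q}}$. Moreover, finite transcendence degree does not make $k$ finitely generated over $C(t)$.

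The specialisation fallback fails as well. For $k=C(t,e^t)$, no differential homomorphism from a subring containing $e^{\pm t}$ into an algebraic extension of $C(t)$ exists, since $e^t$ would have to go to a nonzero algebraic solution of $u'=u$.

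\textbf{The missing idea.} The paper only cites Mitschi--Singer here. The mechanism it relies on is Kovacic's counting argument (Mitschi--Singer, Proposition~3.11), reproduced as Step~3 of the proof in Appendix~A. Let $d=\dim G$ and $m=\text{tr.deg}_{C(t)}k$.
\begin{enumerate}
\item Realise $G^s$ over $C(t)$ for some $s>m$.
\item Write the Picard--Vessiot extension as $C(t)(g_1,\dots,g_s)$ with each $g_i\in G$; this is possible because $C(t)$ is a $C_1$-field.
\item Embed it in the differential closure of $k$, which still has constants $C$.
\end{enumerate}
Each $k(g_i)$ is then a Picard--Vessiot extension of $k$ whose group is a closed subgroup of $G$. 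If all these groups were proper, connectedness of $G$ would force their dimensions to be at most $d-1$, and
\[
sd=\text{tr.deg}_{C(t)}C(t)(g_1,\dots,g_s)\le\text{tr.deg}_{C(t)}k(g_1,\dots,g_s)\le s(d-1)+m<sd,
\]
a contradiction. No disjointness or genericity is needed, and everything reduces to $k=C(t)$.

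\textbf{Gap 2: the reductive step over $C(t)$.} Even over $C(t)$, this step rests on a false claim. Topological generation of $G$ by two elements does not give generation of $\mathfrak g$ by two elements as a Lie algebra. All brackets lie in $[\mathfrak g,\mathfrak g]$, so the Lie algebra generated by $A_0,A_1$ has projection of dimension at most $2$ onto the centre. Hence for $G=\mathbb{G}_m^3$, or any reductive $G$ with centre of dimension at least $3$, the inference ``$A_0,A_1\in\mathrm{Lie}(gHg^{-1})$, hence $H=G$'' is unavailable. The group of $A_0+tA_1$ can still be the full torus: $e^t,e^{\sqrt{2}t},e^{\sqrt{3}t}$ are algebraically independent over $C(t)$. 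But that holds for arithmetic reasons ($\mathbb{Q}$-linear independence of exponents), not Lie generation.

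Likewise, ``constancy of the gauge transformation'' is the substantive content of the $C(t)$ case, not a degree count. Gauge transformations over $C(t)$ can be nonconstant polynomials: for $A=\begin{pmatrix}0&a+bt\\0&0\end{pmatrix}$ the group is trivial, although $A_0,A_1$ span $\mathrm{Lie}(\mathbb{G}_a)$. As far as I recall, Mitschi and Singer obtain the lower bound from local Galois theory at the irregular singular point $t=\infty$, together with Kovacic's results on the unipotent radical.

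Finally, your degree arguments in the unipotent step, with unknowns in $W(\bar k)$, only make sense after reducing to $k=C(t)$.
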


In Appendix A, Singer generalizes this theorem to the case in which the field $k$ is finitely generated as a differential field over its field of constants. We only need to apply this result in the case of $GL_2$ or $SL_2$ to see: 

\begin{prop}\label{useinverse}%\deleted{and of finite, nonzero transcendence degree over $C$}.
    Let $C$ be an algebraically closed field of characteristic zero and $k$ a {nonconstant} differential field containing $C$ as its field of constants. Assume that $k$ {is finitely generated over $C$ in the differential sense.} Then Conjecture \ref{ordoneconj} is not true with two in place of three.
    %{(As we now have the Appendix, we can change $k$ to be a differential field  containing $C$ as its field of constants that is finitely generated over $C$ in the differential sense.) }
\end{prop}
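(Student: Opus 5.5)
Given such a $k$, I will exhibit an order one equation over $k$ that is not of general type yet has three algebraically independent solutions in some differential field extension of $k$ with constants $C$. That shows ``at most two'' cannot replace ``at most three'' in Conjecture \ref{ordoneconj}.

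\textbf{Step 1: a Picard--Vessiot extension with group $SL_2$.} Since $k$ is nonconstant and differentially finitely generated over $C$, Singer's result in Appendix A (the generalization of the Mitschi--Singer theorem) gives a Picard--Vessiot extension $E/k$ with differential Galois group $SL_2(C)$. It is the extension for some order two linear equation $L(x)=x''+a_1x'+a_0x=0$ over $k$. After a standard gauge change we may take $a_1=0$, because the group lies in $SL_2$ and the Wronskian then lies in $k$. Being Picard--Vessiot, $E$ has constant field $C$.

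\textbf{Step 2: the Riccati equation.} Under $u=x'/x$, solutions of $L$ go to solutions of $u'=-u^2-a_0$. This is an irreducible order one Riccati equation over $k$, hence not of general type by Definition \ref{classification}(II). Let $y_1,y_2$ be a fundamental system of $L$ in $E$ and set $u_c = (y_1'+cy_2')/(y_1+cy_2)$ for $c\in C\cup\{\infty\}$. Each $u_c$ lies in $E$, so all three elements below live in a differential field with constant field $C$.

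\textbf{Step 3: three independent solutions.} Take the three solutions $u_0,u_1,u_\infty$. The binding group of the Riccati equation is $SL_2(C)/\{\pm1\}\cong PGL_2(C)$, acting on the solution set as $PGL_2$ acts on $\mathbb P^1$ by linear fractional transformations. That action is sharply $3$-transitive, so in particular generically $3$-transitive. Lemma \ref{gentransortho} then says that $p^{(3)}$ is weakly orthogonal to $\mc C$, where $p$ is the generic type of the Riccati equation.

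To turn this into algebraic independence of $u_0,u_1,u_\infty$ over $k$, I argue directly with transcendence degree. We have $\text{tr.deg}_k E=\dim SL_2=3$, and $E=k\langle u_0,u_1,u_\infty\rangle$. The second equality holds because the map $g\mapsto (gu_0,gu_1,gu_\infty)$ is injective on $PGL_2$, so any automorphism fixing the three $u$'s is $\pm1$ on $y_1,y_2$. The residual $\pm1$ ambiguity makes $E$ at most a quadratic extension of $k(u_0,u_1,u_\infty)$, which does not change transcendence degree. Since the Riccati equation has order one, $\text{tr.deg}_k k(u_0,u_1,u_\infty)=3$ forces the three solutions to be algebraically independent.

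The main obstacle is making the identification in Step 3 precise. One must check that the binding group of $p$ really is $PGL_2$ acting on $\mathbb P^1$. This follows from the Galois correspondence, since the fixed field of the action on the $u_c$ is $k$. One must also handle the $\pm1$ quotient when comparing $E$ with $k\langle u_0,u_1,u_\infty\rangle$, as done above. Everything else is a direct invocation of the appendix theorem and Lemma \ref{gentransortho}.
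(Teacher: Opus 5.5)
Your proposal is correct and is essentially the paper's argument: Appendix~A gives an order two linear equation over $k$ with Galois group $SL_2$, and the associated Riccati equation (which is not of general type) has binding group $PGL_2(\mc C)$ acting $3$-transitively on its solutions, giving three algebraically independent solutions in a differential field with constants $C$. Your transcendence-degree count (the subgroup of $SL_2(C)$ fixing $u_0,u_1,u_\infty$ is $\{\pm 1\}$, so $\text{tr.deg}_k k(u_0,u_1,u_\infty)=\text{tr.deg}_k E=3$) spells out what the paper compresses into an appeal to generic $3$-transitivity, and it places the solutions in $E$ rather than $k^{\diff}$; the normalization $a_1=0$ is unnecessary (the usual reduction involves $\sqrt{W}$, which need not lie in $k$), but nothing in your argument depends on it.
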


%{\color{blue} OLS: So the statement above should be: Suppose $K$ is not a constant field, its constant subfield $C$ is algebraically closed, and $K$ is differentially finitely generated over $C$. Then Conjecture \ref{ordoneconj} is not true with two in place of three.)}

\begin{proof}

Let $\delta X = BX$, $B \in GL_2 (k)$, be a system of linear differential equations over $k$ with binding group $GL_2 (\mc C)$ or $SL_2 ( \mc C)$. For any $k$ as above, such a matrix exists by Appendix~A,  Theorem \ref{thm-A}. By doing gauge transformations over $k$, we may assume without loss of generality that the matrix $B$ is of the form $$
\begin{pmatrix}
0 & 1 \\
a_1 & a_2
\end{pmatrix}
$$
Then $y_2'/y_2$ satisfies a Riccati equation (see Section \ref{thegeneral} or Section 4 of \cite{freitag2023bounding}) whose binding group is $PGL_2 ( \mc C)$. Let $p$ be the generic type of this equation. As this group acts transitively and generically $3$-transitively on $p$, we can find three algebraically independent solutions in the differential closure of $k$.
%$p^{(3)}$ is weakly orthogonal to $\mc C$ and $p^3 = p^{(3)}$. Any three solutions of the equation in the differential closure of $k$ are algebraically independent over $k$.  
\end{proof}

The strong classification of differential equations which are not almost $\mc C$-internal depends closely on the order one assumption of the previous section. Here, we mean the strong local finiteness result for order one equations of general type of Theorem \ref{catone} and stated in algebraic terms following that theorem. A counterexample appears already for order two equations: 
$$\frac{-y}{(x-t)^2} + \left( 2t(t-1)\frac{x'}{y} \right) ' \frac{t(t-1)x'}{(x-t)y} = 0, $$
where $y^2 = x(x-1)(x-t).$ This formula for the Manin kernel of the elliptic curve $E_t$ can be found in appendix B.3 of \cite{PongThesis}. The equation is orthogonal to $\mc C$, but the graphs of multiplication-by-$n$ for $n \in \m N$ in elliptic curve provide algebraic relations of unbounded degree between generic solutions of the equation. By expanding the above equation and rewriting it in terms of the variable $x$, one obtains the standard form of the sixth Painlev\'e equation $P_{VI}(0,0,0,1/2)$ (cf. \cite[Theorem 1.1]{Manin6}). An analogous equation, which we call the Manin Kernel, can be defined on any abelian variety (with similar properties when $A$ is simple and not isomorphic to an abelian variety over $\mc C$). Hrushovski and Sokolovic \cite{HrushovskiSokolovic} proved that every strongly minimal set which is orthogonal to $\mc C$ and is not \emph{geometrically trivial} is nonorthogonal to a Manin Kernel. This led to the question of Hrushovski \cite{hrushovski1998geometric}: Is every geometrically trivial strongly minimal set (which one might have taken to be the definition of general type for higher order equations) is $\aleph_0$-categorical? This was refuted in \cite{freitag2017strong}, where the differential equation satisfied by the $j$-function provides a counterexample. There, isogenies of elliptic curves provide the algebraic relations between generic solutions whose degrees are unbounded. A revision of the question of Hrushovski would be of great interest. To date, one has not been made. On the other hand, the work in \cite{casale2020ax} provides a suggestion. It is shown there that all instances of non $\aleph_0$-categorical trivial strongly minimal equations are related to arithmetic groups. It is an interesting open problem to find such a strongly minimal set that is unrelated to arithmetic groups.

\section{Higher order equations - the Kumbhakar-Srinivasan conjecture} \label{higherorderks}

We continue with an algebraically closed differential field $(k,\delta)$ of characteristic zero. We first recall the notation of \cite{kumbhakar2024classification}, which was remarked on in the introduction. 
%Let $k$ be algebraically closed differential field of characteristic zero. 
Let
\begin{equation} \label{arb} f(y, y' , \ldots y^{(n)}) = 0 
\end{equation} be an algebraic differential equation of order $n$, where $f$ is an irreducible polynomial over $k$. Let $y$ be a nonalgebraic solution to the equation (\ref{arb}). Let $\mf S_{k,f,y}$ be the set of solutions of (\ref{arb}) satisfying: 
\begin{enumerate} 
\item For any $y_1 \in \mf S_{k,f,y}$, there is a differential $k$-isomorphism between the differential fields $k \langle y \rangle$ and $k \langle y_1 \rangle $ that maps $y$ to $y_1$. 
\item For any distinct $y_1, \ldots , y_m \in \mf S_{k,f,y}$, the field of constants of $k \langle y_1 , \ldots , y_m \rangle$ is $C$ and $\text{tr.deg}_k k \langle y_1 , \ldots , y_m \rangle = \sum _{i=1} ^m \text{tr.deg}_k k \langle y_i \rangle $.
%$C=\mc C(k)$. 
%\item For any $y_1 , \ldots , y_2 , \ldots , y_m \in \mf S_{k,f,y}$, $\trdeg _k (k \langle y_1 , \ldots , y_m \rangle) = \sum _{i=1} ^m \trdeg _k (k \langle y_i \rangle ).$
\end{enumerate}  %{\color{blue} OLS: here the set of solutions are elements in a monster model such that the differential field generated by a solution has no-new-constants... should we say this?}

We next give a proof of Theorem \ref{generalizethis} for all $n \in \m N$. Our theorem establishes a stronger bound when the field $k$ is assumed to be a field of constants.

\begin{thm} \label{thegeneral}
%Let $k$ be an algebraically closed differential field  of characteristic zero and 
Let $y$ be a nonalgebraic solution of (\ref{arb}). Then, $\mf S_{k,f,y}$ is a finite set if and only if it has at most $n+2$ elements. When the field $k$ is a field of constants, the bound $n+2$ can be replaced by $1.$  %{\color{red}( I am puzzled: Should 2 here be replaced by 1? We claimed  in the proof that $p^{(2)}$ is not weakly orthogonal to $\mc C$.  ---Wei)}
\end{thm}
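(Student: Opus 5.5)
Let $p=\tp(y/k)$. Since $y$ is nonalgebraic and $k\langle y\rangle$ has transcendence degree at most $n$, $p$ has finite rank $d\le n$. The first step translates $\mf S_{k,f,y}$ into model-theoretic terms. Its elements are realisations of $p$ (condition (1)), and condition (2) says that any distinct $y_1,\dots,y_m$ in it are independent realisations with no new constants. Up to $k$-isomorphism, $\mf S_{k,f,y}$ has at least $m$ elements exactly when $p^{(m)}\perp^w\mc C$. Note that $p^{(m)}\perp^w \mc C$ is the same as $\mc C_{k\langle y_1,\dots,y_m\rangle}=\mc C_k$, because $k$ is algebraically closed, so $\mc C_k$ is as well. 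Also, $p^{(m)}\perp^w\mc C$ implies $p^{(m')}\perp^w\mc C$ for $m'\le m$. Hence the cardinality of (a maximal) $\mf S_{k,f,y}$ is $\sup\{m : p^{(m)}\perp^w \mc C\}$. So it suffices to show that if this supremum is finite, then it is at most $n+2$, and at most $1$ when $k=\mc C_k$.

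Suppose the supremum $N$ is finite, so $p^{(N+1)}\not\perp^w\mc C$. Then $p\not\perp\mc C$. Here I would use the standard fact (Hrushovski / \cite[Lemma 4.3.1]{GST}) that if $p$ is not orthogonal to $\mc C$ then $p$ has a nonalgebraic $\mc C$-internal image. Concretely: if $p^{(m)}\perp^w\mc C$ for all $m$ and $p$ is stationary, then $p\perp\mc C$. Otherwise there is a $k$-definable map $g$ with $q=\tp(g(y)/k)$ nonalgebraic and $\mc C$-internal; take $q$ to be the maximal such quotient (the $\mc C$-internal part of $p$ in the sense of the canonical-base/semi-minimal analysis). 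Independence and no-new-constants for $p^{(m)}$ pass to $q^{(m)}$ by taking images.

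The converse direction is the main obstacle: new constants appearing among $y_1,\dots,y_{N+1}$ should already appear among $g(y_1),\dots,g(y_{N+1})$. I would attack it using the results in \cite{freitag2023bounding,freitag2025finite}, which I expect contain exactly this reduction: $p^{(m)}\perp^w\mc C$ if and only if $q^{(m)}\perp^w\mc C$ for the $\mc C$-internal quotient $q$. Granting this, $N=\sup\{m: q^{(m)}\perp^w\mc C\}$. By Lemma \ref{gentransortho}, the binding group $G$ of $q$ acts generically $N$-transitively on the realisations of $q$. Theorem \ref{BCforACF0}, applied with $RM(q)=d'\le d\le n$, then gives $N\le d'+2\le n+2$.

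For the autonomous case $k=\mc C_k$, I would invoke Fact 6.1 of \cite{jaoui2025abelian}, as in the proof of Theorem \ref{proofconj}: over a constant field, the binding group of a $\mc C$-internal type is commutative. A commutative group acting faithfully and transitively on $q$ acts regularly. The transitivity comes from $q\perp^w\mc C$ via \cite[Lemma 2.2]{freitag2023bounding}; if $N=0$ there is nothing to prove. A regular action cannot be generically $2$-transitive. Indeed, for independent $a,b\models q$ the unique $\sigma\in G$ with $\sigma a=b$ would have to range over generic elements while the orbit of $(a,b)$ has dimension $\dim G=RM(q)<2RM(q)$. So $q^{(2)}\not\perp^w\mc C$, giving $N\le 1$, i.e.\ $\mf S_{k,f,y}=\{y\}$. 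Finally, the converse implications are trivial, since a set with at most $n+2$ elements is finite.
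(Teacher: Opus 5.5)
Your argument is correct, and the step you single out as the main obstacle is not needed. To bound $N$ you only need the inequality $N\le\sup\{m: q^{(m)}\perp^w\mc C\}$, and that is the direction you already have. If $q^{(m)}\not\perp^w\mc C$, then a new constant in $\acl(k,g(a_1),\dots,g(a_m))$ already lies in $\acl(k,a_1,\dots,a_m)$, so $p^{(m)}\not\perp^w\mc C$.

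The rest then goes through on this inequality alone:
\begin{itemize}
\item If $N\ge 1$, then $p\perp^w\mc C$, hence $q\perp^w\mc C$, so the binding group acts transitively.
\item Lemma~\ref{gentransortho} and Theorem~\ref{BCforACF0}, applied to $q$, give $N\le \sup\{m: q^{(m)}\perp^w\mc C\}\le \dim q+2\le n+2$.
\item In the autonomous case, your regularity argument gives $q^{(2)}\not\perp^w\mc C$, hence $p^{(2)}\not\perp^w\mc C$ by the same easy direction, so $N\le 1$.
\end{itemize}
So you can drop the ``granting this'' step and the maximality of $q$: any nonalgebraic $\mc C$-internal image of $p$ over $k$ works. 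The equivalence you wanted does in fact hold for a maximal internal image, but proving it requires showing that this image is preserved under nonforking extensions, which is extra work you do not need.

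Compared with the paper, the routes differ as follows.
\begin{itemize}
\item General case: the paper gets $p^{(n+3)}\not\perp^w\mc C$ in one step from \cite[Corollary 5.1]{freitag2025finite}, which applies directly to the nonorthogonal type $p$. It mentions Lemma~\ref{gentransortho} and Theorem~\ref{BCforACF0} only parenthetically, as another way to see this. Your proof is essentially that alternative made precise. Those two results concern $\mc C$-internal types, so passing to an internal image is what makes them applicable, and the monotonicity $N_p\le N_q$ is all that is required.
\item Autonomous case: the paper cites \cite[Theorem 1.1]{jaoui2025abelian}. You instead use commutativity of the binding group (Fact 6.1 of \cite{jaoui2025abelian}, exactly as the paper does for order one in Theorem~\ref{proofconj}), together with the observation that a regular action is not generically $2$-transitive. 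This is more self-contained.
\item Reduction to the generic type: you work directly with $\tp(y/k)$ for an arbitrary nonalgebraic solution $y$, using only $\text{tr.deg}_k k\langle y\rangle\le n$, rather than first reducing to the generic type. This is fine and slightly cleaner.
\end{itemize}
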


\begin{proof}
%Since the bound specified in part (3) of Theorem \ref{generalizethis} is increasing in the order of the equation, and we will prove that it holds for arbitrary equations, 
It is sufficient to prove the bound when restricting to the complete type $p$ given by the generic solution of equation (\ref{arb}) rather than an arbitrary non-algebraic solution of (\ref{arb}) as in the statement, as  non-algebraic solutions that are not generic satisfy lower-order differential equations. 

If $p$ is orthogonal to the constants, then taking $\{a_i \}_{i\in \m N}$ to be an independent sequence of realizations of $p$ shows that $\mf S_{k,f,y}$ is infinite for the generic solution $y$ of (\ref{arb}).

On the other hand, if $p$ is nonorthogonal to the constants, then $p^{(m)}$ is not weakly orthogonal to $\mathcal{C}$ for some $m \in \m N$.  This follows 
%by stable embeddedness, 
for instance from \cite[Lemma 4.3.1]{GST}, but maybe the first place where this is specifically written is \cite{hrushovski1989almost}\footnote{Note that the terminology was evolving at the time, and what we call weak orthogonality was called almost orthogonality in \cite{hrushovski1989almost}. We prefer to use the term ``almost" in a different way - often in conjunction with internality statements - usually modifying statements in a way which is up to finite index.}, where effective variants of this result (that is, bounding $n$ in terms of some data) were considered for the first time. But in fact, in \cite{freitag2025finite}, Corollary 5.1, the authors show that when $p$ is nonorthogonal to the constants, $p^{(d+3)}$ is already not weakly orthogonal to $\mc C$ where $d$ is the Lascar rank $U(p)$. Noting that $U(p)$ is bounded above by the order of the equation \ref{arb} (see for instance \cite{MMP} or \cite{PongRank}), it follows that $p^{(n+3)}$ is not weakly orthogonal to $\mc C$ (another way to see this is by using Lemma~\ref{gentransortho} and Theorem~\ref{BCforACF0} from \S\ref{prelim}). It is easy to check that the latter implies $|\mf S_{k,f,y}|\leq n+2$. In the case when $k=\mc C_k$, \cite[Theorem 1.1]{jaoui2025abelian} implies that $p^{(2)}$ is not weakly orthogonal to $\mc C$ and hence it follows that in this case $|\mf S_{k,f,y}|\leq 1$.

\begin{comment}
We have:

\begin{lem} \label{5.4}
Let $y$ be a generic solution of (\ref{arb}). Then $\mf S_{k,f,y}$ is infinite if and only if $p = \tp (y/k)$ is orthogonal to the constants. More specifically, $|\mf S_{k,f,y}| = d$ if an only if $p^{(d+1)}$ is not weakly orthogonal to $\mc C$ and $p^{(d)}$ is weakly orthogonal to $\mc C$. 
\end{lem}

In \cite{freitag2025finite}, Corollary 5.1, the authors show that when $p$ is nonorthogonal to a definable set $X$, it follows that $p^{(d+3)}$ is not weakly orthogonal to $X$ where $d$ is the Lascar rank $RU(p)$. Noting that $RU(p)$ is bounded by the order of the equation \ref{arb} (see for instance \cite{MMP} or \cite{PongRank}), it follws that if $p$ is nonorthogonal to $\mc C$, then $|\mf S_{k,f,y}|\leq n+2.$ 

When the field $k$ is a field of constants, \cite[Theorem 1.1]{jaoui2025abelian} implies that $p^{(2)}$ is not weakly orthogonal to $\mc C$. Again, Lemma \ref{5.4} applies to yield the result in this case.
\end{comment}
\end{proof}

% In \cite[Theorem 2.8]{vanderput2015}, for a first order differential equation $f(y,y')=0$, the authors proved that $f$ is of algebraic type if and only if the function field of $f$ contains a new constant.  Their proof can be easily adapted to show that the  differential equation $f(y,y',\ldots,y^{(n)})=0$ over $K$ does not have a generic solution in any differential field extension $F$ of $K$ with $\mc C_F=\mc C_K$  if and only if the differential function field of $f$,
% $K\langle f\rangle=Frac(K[y,y',\ldots,y^{(n)}]/(f))$, contains a new constant. 
%On the other hand,  we can characterize $f(y,y',\ldots,y^{(n)})=0$ being of algebraic type by showing the function field contains ``enough" independent new constants:

%\begin{prop}
%All solutions of $f(y,y',\ldots,y^{(n)})=0$ taken from differential field extensions of $K$ with no new constants are algebraic if and only if $\trdeg\, \mc C_{K\langle f\rangle}/\mc C_K=n$.
%\end{prop}
%{\color{purple} \begin{proof}
% This proposition still lacks a proof. 
% Is it valid? 
%\end{proof} }

\subsection{Some remarks on the proof of Theorem \ref{thegeneral}.} \label{explainhigherorderproof}

One might notice that the result of \cite{kumbhakar2025strongly} seems to rely on classification results for algebraic group actions, which we have not explicitly appeared in our proof above (e.g. the results of \cite{Kaga, popov1973classification, popov1975classification, cantat2018algebraic}). The uses of these results build towards understanding the relationship between an algebraic group $G$ on an algebraic variety $V$ in various terms and situations, but typically understanding the extremal cases in which $V$ is a minimal homogeneous spaces of $G$ as in \cite[Section 8]{cantat2018algebraic} and $G$ is a differential Galois group. 

Any appearance of the absence of such specific results in our above proof is a red herring; the result of \cite{freitag2025finite} we apply above relies on the proof of the Borovik-Cherlin conjecture, which is of a similar (but more general) flavor as the above results. 
%The conjecture is most commonly stated in terms of \emph{generic transitivity}, but we give the equivalent formulation in terms of bounding the dimension of the group $G$, which implies the necessary bounds on the minimal homogeneous spaces of given groups. 
For algebraic groups in characteristic zero, the conjecture was proved in \cite{freitag2023bounding}. 
%in which it is shown that for a faithful primitive action of an algebraic group $G$ on an algebraic variety $V$, the dimension of $G$ is at most $\dim (V) (\dim (V) +2).$ 
This was generalized to differential algebraic group actions on differential algebraic varieties in \cite{freitag2025finite}. An examination in the work of Kumbhakar and Srinivasan \cite[\S5]{kumbhakar2025strongly} shows that the key bounds they use follow from a special case of the Borovik-Cherlin conjecture, namely when $\dim V\leq 4$ (which is the case they are able to prove). We note that this special case follows from the work of \cite{freitag2023bounding}.

\section{On the optimality of the conjecture of Kumbhakar and Srinivasan} \label{optimalks}

In the previous section, we showed in Theorem \ref{thegeneral} that a stronger form of the conjecture of Kumbhakar and Srinivasan holds when $k$ is a field of constants - that is, the bound is one rather than $n+2$. It is natural to ask if this strong bound holds over any other %finitely generated  
differential field. The following result gives a negative answer. The proof follows the same lines as the proof of Proposition~\ref{useinverse}.

\begin{prop}
    Let $C$ be an algebraically closed field of characteristic zero and $k$ a %finitely generated 
    nonconstant differential field with field of constants $C$ and that is differentially finitely generated over $C$ 
    %in the differential sense. 
    Then, Theorem~\ref{thegeneral} does not hold with $n+1$ instead of $n+2$.
    %there is a differential equation over $k$ such that the bound of $n+2$ in Theorem \ref{thegeneral} is tight. 
\end{prop}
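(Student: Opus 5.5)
We need, for each $n$ (or at least some $n$), an order $n$ equation over $k$ with $\mf S_{k,f,y}$ finite but with exactly $n+2$ elements. Following Proposition~\ref{useinverse}, the natural candidate is the projective action of $PGL_{n+1}(\mc C)$ on $\m P^n(\mc C)$, which by Theorem~\ref{BCforACF0} is exactly the extremal generically $(n+2)$-transitive case.

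\emph{Step 1: a Galois group $GL_{n+1}$.} By Appendix~A, Theorem~\ref{thm-A}, which applies since $k$ is nonconstant and differentially finitely generated over $C$, there is a linear system $\delta Y = BY$ with $B$ an $(n+1)\times(n+1)$ matrix over $k$ whose Picard--Vessiot extension has Galois group $GL_{n+1}(\mc C)$ (or $SL_{n+1}(\mc C)$). By a cyclic-vector gauge transformation over $k$, we reduce to a scalar order $n+1$ linear equation $L(x)=0$.

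\emph{Step 2: the projectivized equation.} Projectivizing gives an order $n$ system over $k$ (the generalized Riccati system of \cite[\S4]{freitag2023bounding}). Its solution set $X$ is the image of a fundamental-solution row, or of a nonzero solution vector, in $\m P^n$ in affine coordinates $x_i/x_0$. The binding group is $PGL_{n+1}(\mc C)$, acting on $X$ as on $\m P^n(\mc C)$. Choosing a primitive element $z$ of $k\langle u\rangle$ for a generic $u\in X$ (for instance, a generic $k$-linear combination of the coordinates), we get an irreducible order $n$ equation $f(z,\dots,z^{(n)})=0$ whose generic type $p$ is interdefinable with the generic type of $X$. Here we have $U(p)=n$, matching the order.

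\emph{Step 3: counting.} Since $p$ is $\mc C$-internal, $\mf S_{k,f,y}$ is finite by Theorem~\ref{thegeneral}. The action of $PGL_{n+1}$ on $\m P^n$ is generically $(n+2)$-transitive: $n+2$ points in general position form a single orbit. By Lemma~\ref{gentransortho}, $p^{(n+2)}$ is then weakly orthogonal to $\mc C$. This means that $n+2$ independent realizations $y_1,\dots,y_{n+2}$ of $p$, taken in $\mc U$ with no new constants over $k$, generate a field with constants $C$. They are independent by construction, so the transcendence degree condition holds. Hence $\{y_1,\dots,y_{n+2}\}$ witnesses $|\mf S_{k,f,y}|\ge n+2$, while the set is finite, so the bound $n+1$ fails.

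The main obstacle is Step 3's verification that the Galois group of the projectivized equation is really all of $PGL_{n+1}$ acting on $\m P^n$ in the standard way, and that a scalar primitive element can be chosen so that the equation is irreducible of order exactly $n$ and generic-type interdefinable with $X$. I would handle this as in Proposition~\ref{useinverse}, via Kolchin's correspondence between strongly normal extensions and the full logarithmic-differential equation.
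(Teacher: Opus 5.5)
Your proposal is correct and follows essentially the same route as the paper. Both arguments use Appendix~A to get a $GL$ (or $SL$) linear system, reduce to a scalar equation by a cyclic vector, and pass to the associated Riccati equation $y'/y$, whose binding group is $PGL$ acting on projective space; generic $(\text{order}+2)$-transitivity together with Lemma~\ref{gentransortho} then produces that many independent realizations with no new constants. Your indexing, which starts from $GL_{n+1}$ so that the Riccati equation has order $n$ and $PGL_{n+1}$ acts on $\m P^n$, is the consistent one: the paper starts from $GL_n$, which yields an order $n-1$ equation that is only generically $(n+1)$-transitive, so its ``$(n+2)$'' should be read with that shift.
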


\begin{proof}
Let $\delta X = BX$, $B \in GL_n (k)$, be a system of linear differential equations over $k$ with binding group $GL_n (\mc C)$ or $SL_n ( \mc C)$. For any $k$ as above, such a matrix exists by Appendix A, Theorem \ref{thm-A}. Via appropriate gauge transformations over $k$, without loss of generality that the matrix $B$ is of the form $$
\begin{pmatrix}
0 & 1 & 0 & \ldots &  0 & 0 \\
0 & 0 & 1 & 0 & \ldots  & 0\\
& & & \vdots & & \\
0 & 0 & 0 & 0 & \ldots  & 1 \\
-a_0 & -a_1 & -a_2 & -a_3 & \ldots  & -a_{n-1} \\
\end{pmatrix} \begin{pmatrix} y_0 \\ 
y_1 \\
y_2 \\
\vdots \\
y_{n-1}
\end{pmatrix}
$$
at which point solutions of the resulting scalar order $n$ equation $y^{(n)}=\sum_{i=0} ^{n-1} a_i y^{(i)}$ are in bijection with the solutions of the matrix equation with $y_i=y^{(i)}$. 

Then $y' / y$ satisfies a nonlinear equation of order $n-1$ which we refer to as the associated Riccati equation, see either \cite{freitag2023bounding} or \cite{kumbhakar2024classification}. The equation is of the form

$$P_{d} (y)  + a_{d-1} P_{d-1} (y)  + \ldots + a_1 P_1(y) + a_0 P_0 (y),$$
where $P_0=1$ and $P_i = \delta (P_{i-1}) + y P_{i-1}.$ 

The binding group of the associated Riccati equation is $PGL_n ( \mc C)$. Let $p$ be the generic type of this equation. As this group acts transitively and generically $(n+2)$-transitively on $p$, $p^{(n+2)}$ is weakly orthogonal to $\mc C$. Taking $f$ to be the minimal differential polynomial isolating $p$, we see that $\mf S_{k,f,y}$ is of size at least $n+2$ (as $p$ has $n+2$ independent realisations that generate a differential field with constants $\mc C_k$). 

%and the type is isolated by the vanishing of a differential equation $f$ (which can be easily calculated as explained in the next section). Taking $n+2$ many generic independent realizations of $p$ over $k$, we can see that $\mf S_{k,f,y}$ is of size at least $n+2.$
\end{proof}

There are numerous possible variants of the conjecture of Kumbhakar and Srinivasan. We have mentioned the case of partial differential equations above in Theorem~\ref{orderonecategoricityresult}. One might also consider more than one differential equation at a time, allowing the elements to come from the solutions sets of several differential equations. This problem is considered in \cite{freitag2025finite} in the language of nonorthogonality. In more general model theoretic terms, the problem is discussed in Section 2 of \cite{hrushovski1989almost}. In \cite{freitag2025finite}, the authors show generally that when $p$ and $q$ are the generic types of the zero set of an order $n$ and $m$ differential equation, respectively, with the property that $p \not \perp q$, then $p^{(n+3)} \not \perp ^w q ^{(m+3)}.$ Jimenez \cite{jimenez2024note} shows that there are examples which show that the bounds $(n+3, m+3)$ can not be improved. Can one improve the bounds on nonorthogonality when replacing $p^{(n)}$ by $p^n$? Recall that the latter denotes the partial type $\{\phi(x_1)\land\cdots\land \phi(x_n): \phi(x)\in p\}$.   In the setting of differential equations, this amounts relaxing the assumption that the generic solutions of the equations be \emph{independent}. There is reason to think better bounds might be available after dropping this assumption, but we do not know better bounds at the moment.

\section{Abelian reduction and the structure of $k$-irreducible subfields} \label{abredJM}

In this section, we explain the connections between the results of \cite{kumbhakar2025strongly} and \cite{jaoui2025abelian}, while interpreting the results of \cite{kumbhakar2025strongly} in model theoretic language. 
%By combining these approaches, we show how to strengthen results of each. 
We continue to assume that $(k,\delta)$
 is a differential field of characteristic zero.
 
The following definition has its origins in the work of Nishioka \cite{nishioka1990painleve} in his early attempts to characterize irreducibility in the sense of Painlev\'e (the most general definition was later given by Umemura \cite{Umemura1988771}).
\begin{defn} \cite[1.1]{kumbhakar2025strongly}
A finitely generated $\delta$-field extension $K$ of $k$ having $\text{tr.deg}_kK \geq 1$ is \emph{irreducible} over $k$ if for every differential field $k \subset M \subseteq K$, either $M$ is an
algebraic extension of $k$ or $K$ is an algebraic extension of $M$.  
\end{defn}

The following definition is given in \cite[Definition 2.1]{moosa2014some} in a more general model theoretic context: 
\begin{defn} \label{nofib}
A stationary type $\tp (a/A)$ \emph{admits no proper fibrations} if whenever $ c \in \dcl (Aa)\backslash \acl(A),$ then $a \in \acl (Ac).$ 
\end{defn}

In Definition \ref{nofib}, the type $p= \tp (a/A)$ is assumed to be stationary, which follows, for instance, in the case that $A$ is algebraically closed. In the context we are considering in this section, namely that $p=\tp(a/A)$ is $\mc C$-internal, working over an algebraically closed parameter set won't affect any of the main results in a significant way, since if $\bar p = \tp (a/ \acl (A)) $, then $Aut(\bar p /\mc C)$ is a finite index normal subgroup of $Aut (p/ \mc C)$ by Lemma 2.1 of \cite{jaoui2025abelian}. For the remainder of the section we will assume that $k$ is an algebraically closed differential field.

By Definition \ref{nofib}, $\tp (a/k)$ admits proper fibrations if and only if there exists $c\in k \langle a \rangle$ such that $0<\text{tr.deg}_k k\langle c\rangle <\text{tr.deg}_k k\langle a\rangle$, i.e., $k\langle a\rangle$ is not irreducible over $k$. Thus, one obtains: 
 
\begin{prop}
%Let $K = k \langle a \rangle$ be $k$-irreducible. Then $\tp (a/k)$ admits no proper fibrations. 
$K = k \langle a \rangle$ is irreducible over $k$ if and only if $\tp (a/k)$ admits no proper fibrations.
\end{prop}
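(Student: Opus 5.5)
The plan is to translate both sides into statements about intermediate differential fields and transcendence degrees, using that $k$ is algebraically closed, so $\acl(k)=k$ and $\tp(a/k)$ is stationary. Two standard facts about DCF$_0$ do the translation. First, $\dcl(ka)=k\langle a\rangle$. Second, $\acl(E)$ is the field-theoretic algebraic closure of $E$ for any differential field $E$. Then $c\in\dcl(ka)\setminus\acl(k)$ says exactly that $c\in k\langle a\rangle$ and $c\notin k$, that is, $c$ is transcendental over $k$. Likewise $a\in\acl(kc)$ says exactly that $k\langle a\rangle$ is algebraic over $k\langle c\rangle$, since $\acl(kc)=k\langle c\rangle^{alg}$ is a differential field.

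For the direction ``no proper fibrations $\Rightarrow$ irreducible'', let $M$ be a differential field with $k\subset M\subseteq K$, and suppose $M$ is not algebraic over $k$. Pick $c\in M$ transcendental over $k$. Then $c\in\dcl(ka)\setminus\acl(k)$, so the hypothesis gives $a\in\acl(kc)$. Hence $K=k\langle a\rangle$ is algebraic over $k\langle c\rangle\subseteq M$, and therefore algebraic over $M$. This is the required dichotomy. The standing hypothesis $\text{tr.deg}_k K\geq 1$ in the definition of irreducibility is needed separately. It holds whenever $a\notin k$: since $k$ is algebraically closed, $a\notin k$ makes $a$ transcendental over $k$. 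If instead $a\in k$, the no-fibration condition holds vacuously, so that degenerate case has to be excluded by convention. I would simply record that both notions concern nonalgebraic $a$.

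For the converse, assume $K$ is irreducible and let $c\in\dcl(ka)\setminus\acl(k)$, so $c\in K$ and $c$ is transcendental over $k$. Apply irreducibility to $M=k\langle c\rangle$, which is a differential subfield of $K$ containing $k$. Since $M$ is not algebraic over $k$, $K$ must be algebraic over $M$. Thus $a$ is algebraic over $k\langle c\rangle$, which gives $a\in\acl(kc)$.

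The only step needing care is the identification of $\dcl$ and $\acl$ with differential-field generation and field-theoretic algebraic closure. This is standard quantifier elimination for DCF$_0$: $\dcl(E)$ is the differential field generated by $E$, and $\acl(E)$ is its field-theoretic algebraic closure. Once that is granted, both directions are the one-line arguments above. This matches the informal reformulation the paper gives just before the statement: proper fibrations exist iff some $c\in k\langle a\rangle$ has $0<\text{tr.deg}_k k\langle c\rangle<\text{tr.deg}_k k\langle a\rangle$.
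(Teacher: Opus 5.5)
Your proof is correct and follows the paper's approach. The paper likewise just unwinds Definition~\ref{nofib}, implicitly using that $\dcl$ and $\acl$ are differential-field generation and field-theoretic algebraic closure, and observes that a proper fibration amounts to some $c\in k\langle a\rangle$ with $0<\text{tr.deg}_k k\langle c\rangle<\text{tr.deg}_k k\langle a\rangle$. Your phrasing via ``$K$ algebraic over $k\langle c\rangle$'' is slightly more careful than this transcendence-degree phrasing, since it does not tacitly need $\text{tr.deg}_k K<\infty$, and you rightly flag that the degenerate case $a\in k$ must be excluded.
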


One of the main results of \cite{kumbhakar2025strongly} is:

\begin{thm} \label{KS1.1} \cite[Theorem 1.1]{kumbhakar2025strongly}
Let $k \subset K \subseteq E$ be differential fields, $E$ be a Picard-Vessiot extension of $k$ and $K$ be $k$-irreducible. Then, there is a nonzero element $y \in E$ and a monic linear differential operator $L \in k[ \delta ]$ 
%of order $d$ 
such that $L(y) = 0$ and $K$ is a finite algebraic extension of $k\langle \frac{y'}{y} \rangle $.
\end{thm}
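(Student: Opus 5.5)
Since $E$ is Picard--Vessiot over $k$ and $K \subseteq E$, I would write $K = k\langle a\rangle$ for a finite tuple $a$. Then $p = \tp(a/k)$ is $\mc C$-internal, and by the Proposition just proved, $p$ admits no proper fibrations. Its binding group $G = Aut(p/k\mc C)$ is a quotient of the differential Galois group of $E/k$, so $G$ is a linear algebraic group (constant points). It acts definably and faithfully on the set of realisations of $p$.

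The first step is to use the absence of proper fibrations to show that this action is generically transitive and primitive. Orbits: if the action were not transitive on $p$, a $k\mc C$-definable orbit invariant would produce a constant in $\dcl(ka)\setminus \mc C_k$. Since $k$ is algebraically closed, non-fibration then forces $a \in \acl(kc)$. This is the algebraic-type case of Lemma \ref{equivofnotionsinorderone}, and there $K$ is algebraic over $k\langle c\rangle$ with $c'=0$, so we may take $L=\delta$ and $y=c$... (more carefully, $y=e^{c}$-type choices are unnecessary: $y=$ any nonzero constant makes $y'/y=0$; this degenerate case needs separate handling, and I would instead take $y$ with $y'/y = c$ by choosing $y$ a solution of $\delta y - c\,y$ only if $c\in k$, so I'd argue $K$ algebraic over $k(c)$ means trdeg one, and treat it via a first-order linear $L$). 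Otherwise $G$ acts transitively. Any $G$-invariant definable equivalence relation with infinite classes would give an element $c\in\dcl(ka)$ of intermediate rank, contradicting the lack of proper fibrations. So $(G, p(\mathcal U))$ is a primitive homogeneous space $G/H$ with $H$ a maximal (up to finite index) algebraic subgroup.

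Next I would apply the Jaoui--Moosa abelian reduction, \cite[Theorem 1.1]{jaoui2025abelian}, and more precisely its analysis of $\mc C$-internal types with no proper fibrations. This splits into two cases. Either the binding group is commutative, in which case $p$ is fundamental; since $G$ is linear, a commutative linear group acting regularly gives $K$ generated by a logarithmic derivative, i.e. $a$ is interalgebraic with some $u$ satisfying $u' = \text{const}$ or $u'/u\in k$. Or $G$ is non-commutative with a primitive action. In that case I would use Chevalley's theorem to embed $G/H$, with $H$ maximal, into a projective space $\m P(V)$ for a rational representation $V$ of $G$, as the orbit of a line $\ell$ fixed by $H$. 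Realizing $V$ over $k$ via the Picard--Vessiot structure of $E$ (tensor constructions of the defining system), this line gives a nonzero solution vector $Y\in E\otimes V$ of some linear system $\delta Y = BY$ over $k$. A cyclic-vector argument then turns this into a scalar $y\in E$ with $L(y)=0$, where $L$ is monic in $k[\delta]$, such that $k\langle y'/y\rangle$ is the field of $k\mc C$-invariants of the stabiliser of $[Y]$. Since this stabiliser agrees up to finite index with the stabiliser of $a$, Galois correspondence inside $E$ gives that $K$ and $k\langle y'/y\rangle$ are mutually algebraic, hence $K$ is a finite algebraic extension.

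I expect the main obstacle to be this last step. One has to make the passage from a projective embedding of the homogeneous space to a scalar equation $L(y)=0$, whose logarithmic derivative generates the same field up to algebraic extension, fully precise over $k$. In particular, the cyclic vector must be chosen so that the ratio of coordinates is captured by $y'/y$ and not merely by a projective coordinate vector. I would try first to take $y$ as one coordinate of $Y$ in a basis adapted to a cyclic vector, and check that the projective class of $Y$ is determined by $(y, y', \dots)$ up to scaling.
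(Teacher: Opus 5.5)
The paper does not prove this statement. It is quoted from \cite{kumbhakar2025strongly} and used only as input, so your proposal has to stand on its own. Its core is sound: Chevalley's theorem, realising the representation as a solution space in $E$ of a system over $k$, a cyclic vector, and a logarithmic derivative. The step you call the main obstacle closes in two lines. Take $H=\mathrm{Gal}(E/K)$ and $(V,\ell)$ with $H$ \emph{exactly} the stabiliser of $\ell$. If $P\in GL_n(k)$ puts $\delta Y=BY$ in companion form for $L$, then $Y\mapsto (PY)_1$ is a $C$-linear, $\mathrm{Gal}(E/k)$-equivariant bijection from the solutions of the system in $E$ onto the solutions of $L$ in $E$. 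So if $y\neq 0$ is the image of a generator of $\ell$, then $\sigma\in H$ iff $\sigma(y)\in C^\times y$. Since $\mathcal C_E=C$, we have $\sigma(y'/y)=y'/y$ iff $(\sigma(y)/y)'=0$ iff $\sigma(y)\in C^\times y$. Hence $\mathrm{Gal}(E/k\langle y'/y\rangle)=H$, and $K=k\langle y'/y\rangle$ exactly, not just up to finite extension.

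When $k$ has an element with nonzero derivative (which the cyclic vector lemma needs), this uses neither irreducibility nor primitivity. Your appeal to Jaoui--Moosa is vacuous here. For linear $G$, Theorem \ref{JM4.4}(2) only says $G$ equals its linear part, and it does not produce the commutative/non-commutative split you attribute to it.

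The actual mistakes are in the side cases. First, the non-transitive case cannot occur. $K\subseteq E$ and $E$ adds no constants, so $p\perp^w\mathcal C$ and the binding group acts transitively. Your treatment of that case, with a new constant $c$ and $K$ algebraic over $k(c)$, contradicts $\mathcal C_E=C$ and should be deleted.

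Second, the commutative case as written does not give the conclusion. If $K=k(u)$ with $u'/u=b\in k$, then ``$K$ generated by a logarithmic derivative'' is backwards. The natural choice $y=u$ gives $y'/y=b\in k$, over which $K$ is transcendental. You need something like $y=u+1$, annihilated by $L=\delta^2-(b'/b+b)\delta$, so that $y'/y=bu/(u+1)$ generates $K$. In the additive case the condition is $u'=b\in k^\times$, not $u'$ constant; there $y=u$, $L=\delta^2-(b'/b)\delta$, $y'/y=b/u$ works.

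This case only has to be done by hand when $k=\mathcal C_k$, where cyclic vectors can fail. There the Galois group is commutative (cf.\ \cite[Fact 6.1]{jaoui2025abelian}), so $K$ is Picard--Vessiot over $k$. Irreducibility forces $\dim\mathrm{Gal}(K/k)=1$, i.e.\ $K=k(u)$ with $u'=c$ or $u'=cu$ for some $c\in C^\times$. The explicit choices above then finish the proof, and this is the only place your argument really needs irreducibility.
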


%Supposing that $E$ is of transcendence degree $1$ and the extension is not of general type, the type $p$ of a generator of $E$ is $\mc C$-internal and weakly $\mc C$-orthogonal. The binding group of $p$ is, by the Galois correspondence, a quotient of the binding group of $E$, which is linear. 

Transcendence degree one differential field extensions are always $k$-irreducible and $\mc C$-internality implies that the type of a generator of such extension is not of general type. One can thus see that Theorem \ref{KS1.1} recovers the following result of \cite[Proposition 6.5]{jaoui2025abelian}: 

\begin{prop}
    Suppose $p$ is a complete $1$-dimensional type over $k$ that is $\mc C$-internal, weakly $\mc C$-orthogonal, and $Aut(p/ \mc C)$ equal to its linear part. Then $p$ is interdefinable with the generic type of a Riccati equation. 
\end{prop}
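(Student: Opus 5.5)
The plan is to deduce the proposition from Theorem~\ref{KS1.1}. Let $a$ realise $p$; then $K=k\langle a\rangle$ has transcendence degree one over $k$. First we need a Picard--Vessiot extension containing $K$. Since $p$ is $\mc C$-internal, there is a fundamental system: independent realisations $a=a_1,\dots,a_m$ of $p$ such that $E_0=k\langle a_1,\dots,a_m\rangle$ is a strongly normal extension of $k$ with Galois group $Aut(p/\mc C)$. Because $p$ is weakly $\mc C$-orthogonal and the binding group acts transitively, we can choose the realisations so that $E_0$ adds no new constants (compare the proof of Theorem~\ref{proofconj}). The hypothesis that $Aut(p/\mc C)$ equals its linear part says this strongly normal group is a linear algebraic group. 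Since $k$ is algebraically closed, Kolchin's theory then gives that $E_0$ is a Picard--Vessiot extension of $k$. This identification, of a strongly normal extension with linear Galois group with a Picard--Vessiot extension, is the step I expect to need the most care. If the group is only linear up to a finite-index or central abelian-variety piece, I would first pass to the connected component, using Lemma~2.1 of \cite{jaoui2025abelian} and the algebraic closedness of $k$.

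Next, $K$ has transcendence degree one, so it is $k$-irreducible, as noted just before the proposition. Theorem~\ref{KS1.1} then gives a nonzero $y\in E_0$ and a monic $L\in k[\delta]$ with $L(y)=0$ such that $K$ is a finite algebraic extension of $k\langle u\rangle$, where $u=y'/y$. Since $K$ has transcendence degree one and the extension is algebraic, $u$ is transcendental over $k$ and $k\langle u\rangle=k(u)$. So $u$ satisfies an order-one equation. Because $u$ is a logarithmic derivative of a solution of a linear equation in a Picard--Vessiot extension, $\tp(u/k)$ is $\mc C$-internal with binding group a quotient of a linear group, and it is weakly $\mc C$-orthogonal (it lies in $E_0$ and is transcendental). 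It then acts transitively on a strongly minimal set. By Hrushovski's list in the proof of Theorem~\ref{characterizinggentype}, only cases (1)--(4) can occur, since the elliptic case is not linear. In each case $u$ can be rationally re-parametrised so that it satisfies a Riccati equation $x'=a_2x^2+a_1x+a_0$. For $\m G_a$, $\m G_m$ and $Aff_1$ this is immediate; for $PGL_2$ the generic type is the projective line and the Riccati form follows by the standard computation.

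Finally, we must upgrade interalgebraicity to interdefinability, so that $k\langle a\rangle = k\langle w\rangle$ for a Riccati solution $w$. Here $a$ and $u$ are interalgebraic over $k$, and $k(a)\supseteq k(u)$ is a finite extension. I would use the binding group. The group $Aut(p/\mc C)$ acts transitively on $p$ and is connected (replace it by its connected component; over algebraically closed $k$ the type is stationary). The map $a\mapsto u$ is then a finite equivariant cover of homogeneous spaces of a linear connected group of dimension at most three. Going through the four cases, the covering $\m G_m\to\m G_m$, $x\mapsto x^r$, is the only nontrivial cover that appears, and it is still a Riccati-type ($\m G_m$) action. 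The $\m P^1$ and $\m A^1$ homogeneous spaces for $PGL_2$ and $Aff_1$ are simply connected in the relevant equivariant sense. Hence the generic type $p$ is itself isomorphic to one of the four linear actions of cases (1)--(4). Choosing the point $w\in k(a)$ corresponding to the coordinate on $\m A^1$ or $\m P^1$ gives $k(a)=k(w)$ with $w$ satisfying a Riccati equation, which is the required interdefinability.
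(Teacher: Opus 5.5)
Your opening steps match the paper's deduction: embed $k\langle a\rangle$ in the Picard--Vessiot extension generated by a fundamental system, use transcendence degree one to get $k$-irreducibility, and apply Theorem~\ref{KS1.1}. (Realise the fundamental system inside $k^{\diff}$; that, not weak orthogonality of $p$, is why no new constants appear.) The gap is in what follows. Theorem~\ref{KS1.1} supplies only one piece of information, that $u=y'/y$ with $L(y)=0$, and you never use it again. Instead, $\tp(u/k)$ is a $1$-dimensional, $\mc C$-internal, weakly $\mc C$-orthogonal type with linear binding group. Those are exactly the hypotheses you started with on $p$. So the claim that ``in each case $u$ can be rationally re-parametrised so that it satisfies a Riccati equation'' is the proposition itself applied to $\tp(u/k)$, and it is asserted rather than proved.

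This claim does not follow from Hrushovski's list. The list gives a definable isomorphism of the binding-group action with, say, $PGL_2(\mc C)$ acting on $\m P^1(\mc C)$, but only over parameters outside $k$ (a fundamental system). Composing with the coordinate on $\m P^1(\mc C)$ sends $u$ to a \emph{constant}. If that map were defined over $k$, it would produce a new constant in $\dcl(ku)$, contradicting weak $\mc C$-orthogonality. So the identification gives neither an element of $k(u)$ nor a Riccati equation with coefficients in $k$; what is missing is a descent of the model action to $k$. For the regular actions of $\m G_a$ and $\m G_m$, Kolchin's theorem on fundamental types (Fact~6.4 of \cite{jaoui2025abelian}) supplies this, but it does not apply directly to the non-regular actions of $Aff_1$ and $PGL_2$. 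Your last step, ``choosing the point $w\in k(a)$ corresponding to the coordinate'', fails for the same reason.

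One way to close the gap, which also makes Theorem~\ref{KS1.1}, Hrushovski's list and the covering analysis unnecessary, is as follows.
Since $k$ is algebraically closed, the Picard--Vessiot torsor is trivial. So $E=k(Z)$ with $Z\in G(E)$, $Z'=AZ$ and $A\in\mathfrak g(k)$, and the Galois group $G(\mc C_k)$ acts by $Z\mapsto Zg$. By the Galois correspondence, $k\langle a\rangle=E^H=k(G/H)$ for a closed subgroup $H$. Here $G/H$ is a homogeneous curve over $\mc C_k$ for a connected linear group, hence $\m A^1$, $\m G_m$ or $\m P^1$, and the $G$-action extends to $\m P^1$. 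Let $w$ be a coordinate on $G/H$, evaluated at the image of $Z$. Then $k\langle a\rangle=k(w)$. Moreover $w'$ is obtained by applying the vector field induced by $A$, which is a $k$-linear combination of $\partial_w$, $w\partial_w$ and $w^2\partial_w$. Hence $w'=a_0+a_1w+a_2w^2$ with $a_i\in k$, not all zero, which gives the interdefinability directly.
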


%Theorem \ref{KS1.1} generalizes the previous result of Jaoui and Moosa to the higher order situation as long as the extension is assumed to be $k$-irreducible. 

We now recall one of the central notions from \cite{jaoui2025abelian}.

\begin{defn} \cite[Section 3]{jaoui2025abelian} By an \emph{abelian reduction} of a stationary type $p \in S(k),$ we mean a fibration $f: p \rightarrow p_{ab}$ such that \begin{enumerate}
    \item $p_{ab}$ is $\mc C$-internal and has binding group an abelian variety in $\mc C$ and 
    \item if $g: p \rightarrow q$ is a definable function with $q $ $\mc C$-internal and having binding group an abelian variety in $\mc C$, then there is a definable function $h: p_{ab} \rightarrow q$ such that $h f = g.$ We refer sometimes to $p_{ab}$ as the abelian reduction of $p$. 
\end{enumerate}
\end{defn}

In the context of this section, with $k$ algebraically closed, the binding group $G$ of 
%the extension $E$
a $\mc C$-internal type $p$ over $k$ is (isomorphic to) a connected algebraic group of $\mc C$-points. $G$ has a maximal $k$-definable linear subgroup $L$, such $L$ is normal, and $G/L$ is a connected abelian variety. $L$ is called the linear part of $G$. $G/L$ is called the abelian part of $G$. $G$ has a minimal normal definable subgroup $D$ such that $G/D$ is linear. $D$ is called the anti-linear part of $G$. The linear and anti-linear parts of $G$ commute, and $G = L \cdot D$. Jaoui and Moosa prove \cite[Theorems 3.7 and 4.2 and Corollary 4.4]{jaoui2025abelian}:

\begin{thm} \label{JM4.4} Let $p$ be a $\mc C$-internal type with binding group $G$. 
\begin{enumerate}
    \item Then $p$ has an abelian reduction $p_{ab}$. If in  addition $p$ is weakly orthogonal to $\mc C$, then the binding group of $p_{ab}$ is the abelian part of $G$ (this is one of the main results in \cite{jaoui2025abelian}, stated as Theorems 3.2 and 4.2 there).
    \item If $p$ has no proper fibrations, then we must have $G$ equal to either its linear part or its abelian part (this is a consequence of part(1) and appears in \cite[Corollary 4.4]{jaoui2025abelian}). 
\end{enumerate} 
\end{thm}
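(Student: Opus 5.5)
Part (1) is the substantive result of Jaoui and Moosa; part (2) is to be derived from part (1). I treat them in that order.

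\emph{Existence of $p_{ab}$.} Fix $a\models p$ and let $G=Aut(p/\mc C)$, a connected algebraic group of $\mc C$-points since $k$ is algebraically closed. The candidate is the quotient of $p$ by the orbit equivalence relation of the anti-linear part $D$. Since $D$ is a normal definable subgroup of the binding group, this relation is $k$-definable. Elimination of imaginaries then gives a $k$-definable map $f:p\to p_{ab}$ with $f(a)$ coding the orbit $D\cdot a$.

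The type $p_{ab}$ is $\mc C$-internal, being a definable image of a $\mc C$-internal type. Its binding group is a quotient of $G/D$. More precisely, I will check that it is the image of $G$ in the permutations of $p_{ab}$, i.e. $G/N$ where $N\supseteq D$ is the kernel of the induced action.

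For universality, take $g:p\to q$ with $q$ $\mc C$-internal and binding group an abelian variety $A$. Restriction gives a surjective homomorphism $G\to A$. Its kernel contains $D$: $G/D$ is linear and every homomorphism from a linear group to an abelian variety is trivial, so $D$ must map onto $A$. Hence $D$ acts trivially on $q$, so $g$ is constant on $D$-orbits and factors as $h f$ for a definable $h$.

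It remains to see that $p_{ab}$ itself has abelian-variety binding group. Here I expect to need the weak orthogonality hypothesis. If $p\perp^w\mc C$, the action of $G$ on $p$ is transitive (by \cite[Lemma 2.2]{freitag2023bounding} as used earlier). Then $p_{ab}$ is the homogeneous space $G/DH$, where $H$ is a stabilizer. I will show that the faithful quotient acting on it is $G/L$, the abelian part, i.e. that the kernel of the action is exactly $L\cdot D$.

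The inclusion $L\subseteq$ kernel: $L$ is normal and $G/L$ is commutative, so this uses the decomposition $G=L\cdot D$. On a transitive action, $L$-orbits form a $G$-invariant partition whose quotient carries a transitive action of the commutative group $G/L$. Any faithful transitive commutative action is regular, so that quotient is a torsor for an abelian variety.

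This identification of the kernel with $L\cdot D$, as opposed to something strictly smaller, is the main obstacle. The issue is whether quotienting by $D$-orbits or by $L$-orbits gives the universal abelian quotient. I would first try to show that the map $p\to p/L$ is itself universal. Commutativity of $L$ and $D$, together with $G=LD$, should make $p/L$ and $p/D$ agree up to interdefinability on the abelian-variety part, since any commutative quotient action through which $D$ surjects factors through $G/L$. When $p$ is not weakly orthogonal, I would first pass to the fibers of the canonical map $p\to\mc C$ (the constants in $\dcl(ka)$), work over those constants where the type becomes weakly orthogonal, and descend.

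\emph{Part (2).} Suppose $p$ has no proper fibrations. As in the paper's treatment of orthogonality, the no-fibration hypothesis implies that either $p$ is weakly orthogonal to $\mc C$ or $p$ is $\mc C$-algebraic; in the latter case $G$ is trivial and both conclusions hold. So assume $p\perp^w\mc C$ and apply (1) to $f:p\to p_{ab}$. No proper fibrations forces either $f(a)\in\acl(k)$, or $a\in\acl(k f(a))$.
\begin{itemize}
\item In the first case $p_{ab}$ is algebraic, so the abelian part $G/L$ is trivial (it is connected), and $G=L$.
\item In the second case the fibers of $f$ are finite, so the connected group $D$ (or $L$, in the refined argument) acts trivially on $p$. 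By faithfulness it is trivial, so $G$ equals its abelian part.
\end{itemize}
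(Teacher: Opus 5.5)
\textbf{The construction quotients by the wrong subgroup.} You quotient $p$ by $D$-orbits, but $G/D$ is linear by the definition of $D$. So the binding group of $p/D$, being a quotient of $G/D$, is linear. Thus $p/D$ is the Picard--Vessiot part of $p$, which is exactly how the paper uses $D$ in the discussion after Theorem~\ref{JM4.4}. Its binding group is an abelian variety only when it is trivial. For instance, if $G=A$ is an abelian variety acting regularly on $p$, then $D=G$ and $L=1$, so $p/D$ is a single point while $p_{ab}$ is $p$ itself.

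Your universality step is internally inconsistent. From the linearity of $G/D$ you correctly deduce that $D$ maps \emph{onto} $A$, and then conclude that $D$ acts trivially on $q$. What is true is that $L$, being connected and linear, has trivial image in any abelian variety. So it is $L$ that acts trivially on $q$, and $g$ is constant on $L$-orbits. Likewise, the kernel $L\cdot D$ you aim for is all of $G$. Nor do $p/L$ and $p/D$ ``agree on the abelian part'': they are the abelian piece and the linear piece of $p$, respectively.

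\textbf{The corrected argument.} With $L$ in place of $D$, existence and universality are immediate and need no weak orthogonality, so no descent over fibers in the constants is required. $G$ permutes the $L$-orbits and $L$ acts trivially on them, so the binding group of $p/L$ is a quotient of the abelian variety $G/L$. The argument above then gives universality.

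Weak orthogonality enters only in identifying that binding group, and this is the idea your proposal is missing. If $G$ acts transitively and $H$ is the stabilizer of $a$, then $LH$ is normal because $G/L$ is commutative. Hence the binding group of $p/L$ is $G/LH$, and you must show that $H$ has finite image in $G/L$. This holds for two reasons:
\begin{enumerate}
\item Faithfulness and transitivity force $H\cap Z(G)=1$.
\item The anti-affine part of $H^0$ maps trivially to the affine group $G/D$, so it lies in $D\subseteq Z(G)$, and hence is trivial by (1).
\end{enumerate}
Therefore $H^0$ is linear and contained in $L$.

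This finiteness is also what your first bullet in (2) actually needs: $f(a)\in k$ gives $G=LH$, so the connected group $G/L$ is finite, hence trivial. Your second bullet works only in its $L$-version. Finite $L$-orbits and connectedness make $L$ fix every realization of $p$, so $L=1$ by faithfulness. With $D$ the same reasoning gives $D=1$, i.e.\ $G=L$, which is the linear case rather than the abelian one.
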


If $E$ is a strongly normal extension of $k$ and $k \subset K \subset E$ with $K=k \langle a \rangle$ being $k$-irreducible and $p= \tp (a/k)$, then it follows, by part (2) of the above theorem, that either $K$ has binding group with trivial linear part or trivial abelian part. In the case that the linear part is trivial, the binding group $A$ of $p$ is commutative. A faithful transitive action of a commutative group is regular, so $p$ is fundamental. Then Kolchin's theorem \cite[see VI.9]{KolchinDAAG} which is in our language given as Fact 6.4 of \cite{jaoui2025abelian} tells us that $p$ is interdefinable with the generic type of a full logarithmic-differential equation on $A$ over $k$. 

As the anti-linear part $D \lhd G$ is a normal subgroup, the subfield $K \subset E$ fixed by $D$ is a strongly normal extension of $k$. The Galois group of $K /k$ is $L=G/D.$ As its Galois group is linear, $K/k$ is Picard-Vessiot (cf. Fact 1.20 in \cite{meretzky2025picard}). Any subfield of $k \langle a \rangle $ of $E$ with the property the type $tp(a/k)$ has linear binding group must be a subfield of $K$. Putting together the last two paragraphs, we have recovered part (1) of the following result from \cite{kumbhakar2025strongly}. 

\begin{thm}  \label{KS1.2} \cite[Theorem 1.2]{kumbhakar2025strongly}
Let $k \subset K \subseteq E$ be differential fields with $E$ a strongly normal extension of $k$. 
%and $K$ be $k$-irreducible. 

\begin{enumerate} 

\item If $K$ is $k$-irreducible then either $K$ is an abelian extension of $k$ or $K \subset L \subset E$ where $L$ is a Picard-Vessiot extension of $k$. 
\item If the field $K$ can be embedded in a purely transcendental extension field of $k$, then $k \subseteq K \subseteq L \subseteq E$ where $L$ is a Picard-Vessiot extension of $k$. 
\end{enumerate}
\end{thm}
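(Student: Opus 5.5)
Part (1) has essentially been obtained in the discussion just before the statement, so the plan is to record that argument cleanly and then handle part (2). For (1), write $K=k\langle a\rangle$. Since $K\subseteq E$ and $E/k$ is strongly normal, $p=\tp(a/k)$ is $\mc C$-internal, and its binding group is a quotient of the binding group $G_E$ of $E$ by Galois correspondence. If $K$ is $k$-irreducible, then $p$ admits no proper fibrations, by the proposition identifying irreducibility with the absence of proper fibrations. Theorem~\ref{JM4.4}(2) then says the binding group of $p$ is equal either to its linear part or to its abelian part.

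In the abelian case the group is commutative and acts faithfully and transitively, hence regularly. So $p$ is fundamental, and by Kolchin's theorem (Fact 6.4 of \cite{jaoui2025abelian}) $K$ is an abelian extension of $k$. In the linear case, let $D\lhd G_E$ be the anti-linear part and $L=E^{D}$. Then $L/k$ is strongly normal with linear Galois group $G_E/D$, hence Picard-Vessiot. Since the image of $D$ in the binding group of $p$ maps onto a group with linear quotient, and $D$ is minimal with linear quotient, $D$ acts trivially on $a$. Hence $a\in L$, giving $K\subseteq L$.

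For (2), I would avoid the irreducibility hypothesis and run the same argument directly with $L=E^{D}$. It suffices to show that the binding group of $\tp(a/k)$, for $K=k\langle a\rangle$, is linear; then $D$ fixes $a$ as above. Let $G_K$ be the Galois group of $K$ over $k$ (the quotient of $G_E$ acting on $K$) and let $A=G_K/L_K$ be its abelian part. By Theorem~\ref{JM4.4}(1), after possibly replacing by a nonforking situation, the abelian reduction $p_{ab}$ of $p$ has binding group $A$, and it is realised by some $b\in \dcl(ka)\subseteq K$. The key claim is that a purely transcendental extension $k(t_1,\dots,t_m)$ cannot contain a differential subfield $k\langle b\rangle$, with $b\notin k$, that is generated by a generic solution of a logarithmic-differential equation on a nontrivial abelian variety. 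The reason is that $k\langle b\rangle$ would be the function field over $k$ of a torsor for $A$ (a $k$-form of an abelian variety, since $k$ is algebraically closed and $b$ realises a principal homogeneous space). A unirational variety has no nonconstant rational maps to a variety of that kind: rational maps from rational varieties to abelian varieties (or torsors) are constant. Hence $A$ is trivial, $G_K$ is linear, and $K\subseteq L$.

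The main obstacle is this last step. Two things need care: first, identifying $k\langle b\rangle$ precisely with the function field of an abelian variety over $k$, which requires handling the weak orthogonality hypothesis in Theorem~\ref{JM4.4}(1), possibly by reducing to it through fibrations; second, making sure the embedding into a purely transcendental field gives a dominant rational map from affine space to that abelian variety. I would first try to use the reduction to the case where $p$ is weakly $\mc C$-orthogonal, by quotienting out constants in $K$, which lie in $\mc C_k$ anyway. After that I would invoke the standard fact that abelian varieties contain no rational curves.
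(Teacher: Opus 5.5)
Your part (1) is the argument the paper gives in the paragraphs just before the statement: irreducibility means no proper fibrations, Theorem~\ref{JM4.4}(2) applies, Kolchin's theorem handles the commutative case, and $L=E^{D}$ handles the linear case. One sentence needs tidying. Read $G_K$ as the binding group of $p$, which is a definable quotient of $G_E$ (note $K/k$ need not be strongly normal). The kernel $N$ of $G_E\to G_K$ is then normal and definable with $G_E/N$ linear. The smallest such subgroup is $D$, since an intersection of two such kernels again has linear quotient. Hence $D\subseteq N$, and so $K\subseteq E^{D}$.

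For part (2) the paper gives no argument of its own. It calls this part well known and unrelated to part (1), and cites \cite[Lemma 3.1]{eagles2025internality}. Your route is genuinely different, and it works.

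\begin{itemize}
\item If $G_K$ is not linear, Theorem~\ref{JM4.4}(1) gives $b\in\dcl(ka)\setminus k$ whose type has binding group the nontrivial abelian variety $A$.
\item That type is fundamental, since a commutative group acting faithfully and transitively acts regularly.
\item By Kolchin, $k\langle b\rangle$ is the function field of an $A$-torsor over $k$. The torsor is trivial because $k=k^{alg}$, and $\text{tr.deg}_k k\langle b\rangle=\dim A$.
\item Restricting the embedding of $K$ therefore yields a $k$-embedding $k(A)\hookrightarrow k(t_1,\dots,t_m)$. This is a dominant rational map from affine space onto a positive-dimensional abelian variety, which is impossible (no rational curves, or pull back global $1$-forms).
\end{itemize}

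The two obstacles you flag are not real. Weak $\mathcal C$-orthogonality of $p$ is automatic. Since $\mathcal C_E=\mathcal C_k$ and $k$ is algebraically closed, $\mathcal C_{K^{alg}}=(\mathcal C_K)^{alg}=\mathcal C_k$, so no reduction or nonforking replacement is needed. The same remark is what secures transitivity in the abelian case of (1). Also, triviality of the torsor together with $\text{tr.deg}_k k\langle b\rangle=\dim A$ makes $b$ a generic point of the abelian variety over $k$, so the dominant rational map comes directly from the field embedding.

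Your route buys a uniform treatment of both parts through the abelian reduction. It also reduces (2) to a transparent birational fact. The cost is that it rests on the main theorem of \cite{jaoui2025abelian}, whereas the paper treats (2) as a standard fact independent of that machinery and of part (1).
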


    We note that part (2) of Theorem~\ref{KS1.2} is well-known (and is not related to part (1)), an explicit argument can be found in \cite[Lemma 3.1]{eagles2025internality}.

On the other hand, whenever a type $p$ is $\mc C$-internal, weakly $\mc C$-orthogonal, and admits no proper fibrations, Theorem \ref{KS1.2} can be seen to imply the part (2) of Theorem \ref{JM4.4}. Indeed, when $p \in S(k)$ and $\bar k$ is the differential closure of $k,$ $p(\bar k)$ generates a strongly normal extension of $k.$ The extension is generated by some tuple of independent realizations of $p$, $\bar a = (a_1 , \ldots , a_n)$, and $\bar a \models p^{(n)}$ is fundamental in the sense of Section 6 of \cite{jaoui2025abelian}. Now for $a \models p,$ $k \langle a \rangle $ is a $k$-irreducible differential subfield of a strongly normal extension of $k$. Now part (1) of Theorem \ref{KS1.2} implies part (2) of Theorem \ref{JM4.4}. However, note that the results in \cite{kumbhakar2025strongly} do not recover the part (1) of Theorem~\ref{JM4.4} which is the stronger result.
%(which appears as Corollary 4.4 of \cite{jaoui2025abelian}). 

%The conclusions of Jaoui and Moosa with regard to this sort of inductive construction are more detailed. For instance, their Abelian reduction of the generic type of the generator of $E$ over $k$ is shown to have various properties - for instance, it has a universal property (Definition 3.4 of \cite{jaoui2025abelian}) and is well-behaved with respect to base change (Section 5 of \cite{jaoui2025abelian}). The abelian reduction $p_{ab}$ also applies to types which are not fundamental, though, one can deduce the existence of such a reduction via embedding into a fundamental type and applying the results of Kumbhakar and Srinivasan.

\appendix
\section{The inverse problem}
 \begin{center}
 {\scshape Michael F. Singer \\[2pt]}
 {\small  North Carolina State University} 
 %\textsuperscript{1}
\end{center}

\bigskip

In this appendix, we will give a proof of the following:

\begin{thm} \label{thm-A}
%\begin{quotation} 
Let $K$ be a nonconstant ordinary differential field of characteristic zero with algebraically closed constants $C$ that is  finitely generated over $C$ in the differential sense and let $G$ be a connected linear algebraic group defined over $C$. Then $G$ is the differential Galois group of a Picard-Vessiot extension of $K$.
%\end{quotation}
\end{thm}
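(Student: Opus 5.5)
The plan is to reduce Theorem \ref{thm-A} to the Mitschi--Singer theorem quoted above, which handles the case where $K$ has finite nonzero transcendence degree over $C$. The new situation is when $K$ is differentially finitely generated over $C$ but has infinite transcendence degree. Write $K = C\langle u_1,\dots,u_r\rangle$. Since $K$ is nonconstant, we may pick $t\in K$ with $t'\neq 0$.

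The first step is to find a subfield on which Mitschi--Singer applies. Consider the subfield $k_0 = C(t, t')$, or more generally a subfield $k_0 \subseteq K$ that satisfies three conditions: it is a differential field, it has finite nonzero transcendence degree over $C$, and it has constants exactly $C$. The simplest choice is $k_0 = C(t)$ with derivation normalized: replace $\delta$ by $\frac{1}{t'}\delta$ on $K$, so that $t$ satisfies $\delta t = 1$. This rescaling does not change constants, Picard--Vessiot extensions, or Galois groups, because a system $\delta Y = AY$ becomes $\tilde\delta Y = (A/t')Y$ over $K$. After this rescaling, $k_0 = C(t)$ is a differential subfield with constants $C$. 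Mitschi--Singer then gives a system $\tilde\delta Y = A Y$ with $A \in \mathfrak{gl}_n(C(t))$ whose Picard--Vessiot extension $E_0/C(t)$ has Galois group $G$.

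The second step is to lift this system to $K$. The Picard--Vessiot extension $E$ of $K$ for the same system has Galois group $H \le G$. By the Galois correspondence / Kolchin's theorem on base change, $H$ is isomorphic to $\mathrm{Gal}(E_0/E_0\cap K)$, using a compositum in a universal extension with constants $C$. So we need the linear disjointness $E_0 \cap K = C(t)$. Arbitrary $A$ will not give this, since $K$ could contain part of $E_0$, and this is the main obstacle.

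To get disjointness I would exploit freedom in the choice of $A$, and I see two routes. The first is a parameter trick. Mitschi--Singer (via Kovacic's inductive method) produce $A$ in families: $A = \sum c_i A_i$, or $A = A_0 + tA_1$ with generic constant parameters from $C$. The intermediate fields of $E_0/C(t)$ fixed by proper subgroups come from finitely many types: algebraic extensions, Picard--Vessiot extensions of smaller groups, and quotients. For $E_0\cap K \neq C(t)$, $K$ must contain a nontrivial PV subextension $F$ of $E_0/C(t)$ for a nontrivial quotient of $G$; the connectedness of $G$ excludes finite quotients up to the relative algebraic closure of $C(t)$ in $K$. Since $K$ is differentially finitely generated, the set of "bad" parameters should be a countable union of proper Zariski closed sets. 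This needs care: I would show that for each finitely generated datum (a quotient $G\to Q$ with $Q$ a torus, $\mathbb{G}_a$, or simple, following the Mitschi--Singer reduction), only countably many parameter values allow $F \subseteq K$. Then, assuming $C$ is uncountable or passing to a suitable model-theoretic argument when it is not, generic parameters work. The second route is a descent argument. Find some $H$-realization over $K$ by Kovacic's method directly: solve the inverse problem for the reductive part (tori, $\mathbb{G}_a$, and semisimple via Mitschi--Singer's lemmas that only use a nonconstant element), and then handle the unipotent radical by Kovacic's cohomological step. Those steps only require that one can realize $\mathbb{G}_a^m$ and $\mathbb{G}_m^m$ as Galois groups over $K$, together with appropriate independence. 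For $\mathbb{G}_m$ and $\mathbb{G}_a$ over $K$, use $y' = c\,t^{j}$-type equations and show independence via the Kolchin--Ostrowski theorem, again needing infinitely many choices so that finitely many are not dependent modulo $K$, which follows from finite generation (a Rosenlicht-style counting of logarithmic derivatives / derivatives in $K$ modulo $C(t)$). I expect the main difficulty to be this independence/disjointness step over a field of infinite transcendence degree. I would first try the parameter-genericity route, since it keeps Mitschi--Singer as a black box.
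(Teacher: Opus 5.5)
Your setup is correct and matches the appendix's first move. Rescaling to $\delta=D/Dt$ makes $C(t)$ a $\delta$-subfield without changing constants or Galois groups. Mitschi--Singer applies over $C(t)$, and the group over $K$ of an equation realizing $G$ over $C(t)$ is $\mathrm{Gal}(E_0/E_0\cap K)$. But the disjointness $E_0\cap K=C(t)$ is the entire content of the theorem, and neither of your routes establishes it.

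\emph{Parameter route.} You do not prove that the bad parameters form a countable union of proper closed sets, and properness is exactly the difficulty. Since $C\subseteq K$, you must exhibit some member of the family whose group does not drop over $K$, which is the original problem for that family. Even granting the claim, the conclusion needs $C$ uncountable, while the theorem must cover, e.g., $C=\overline{\mathbb{Q}}$; the ``suitable model-theoretic argument'' is not supplied.

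\emph{Descent route.} This only relocates the obstacle. The semisimple base case of Mitschi--Singer is a separate realization over $C(x)$, proved by arguments specific to that field. It does not reduce to realizing $\mathbb{G}_a^m$ and $\mathbb{G}_m^m$, and your claim that it uses only a nonconstant element is unsupported. Any equation you import from $C(t)$ faces the same question of whether its group drops over $K$, and the independence statements you need over $K$ are only asserted.

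\textbf{The missing idea: realize $G^s$ for large $s$ instead of $G$, and count transcendence degrees.}

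Choose a $D$-transcendence basis $y_1,\dots,y_n$ of $K$ over $C$; if $n=0$, $K$ has finite transcendence degree and Mitschi--Singer applies directly. Take $t=y_1$, $\delta=D/Dy_1$, $k=C(y_1)$, and $K_0=C\langle y_1,\dots,y_n\rangle_D=k\langle Dy_1,y_2,\dots,y_n\rangle_\delta$. Finite differential generation is used exactly here: it gives $m:=\text{tr.deg}_{K_0}K<\infty$.

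For $s>m$, let $F=k(g_1,\dots,g_s)$ be a Picard--Vessiot extension of $k$ with group $G^s$ and $g_i\in G(F)$; this form is available because $k$ is a $C_1$ field. Since $F$ is $\delta$-algebraic over $k$, the $\delta$-independent elements $Dy_1,y_2,\dots,y_n$ stay $\delta$-independent over $F$ (Kolchin). Hence $F$ and $K_0$ are free over $k$, and $\text{tr.deg}_{K_0}K_0(g_1,\dots,g_s)=s\dim G$.

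Now suppose every $K(g_i)/K$ had group of dimension less than $\dim G$. Then $\text{tr.deg}_{K_0}K(g_1,\dots,g_s)\le s(\dim G-1)+m<s\dim G$, a contradiction. So some $K(g_i)$ is Picard--Vessiot over $K$ with group a closed subgroup of full dimension in the connected group $G$, i.e.\ $G$ itself.

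This argument keeps Mitschi--Singer as a black box, as you wanted. It works for every algebraically closed $C$ and never requires identifying intermediate fields of $E_0$.
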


The argument is a slight modification of  an argument due to  Kovacic as given in \cite[Proposition 3.11]{MitschiSinger1996}. %{\color{purple} (why citing Proposition 3.11 rather than Theorem 1.1 ?)} 
In that paper,  the authors show that if  a differential field $F$ containing $C$ as its field of constants and of finite, nonzero  transcendence degree over $C$, then any $G$ as above is the differential Galois group of a Picard-Vessiot extension of $F$\footnote{There is a large literature concerning inverse problems for various groups over various differential fields. See \cite{Feng_Wibmer} for a discussion of this and references.}.\\

\begin{proof}
Let $K$ be as above and denote the derivation by $D$. The differential field  $K$ is differentially algebraic over a field of the form  $C\langle y_1, \ldots y_n\rangle_D$ where $\{y_1, \ldots, y_n\}$ is a $D$-transcendence basis of this field. We can assume that $n\geq 1$ since the case of no differential transcendentals is already covered in \cite{MitschiSinger1996}.  Finally, let $E$ be the $D$-differential closure of $K$.\\

Define a new derivation $\delta = \frac{1}{Dy_1} D$ on $E$. Note that $\delta(y_1) = 1$ and so $k=C(y_1)$ is a $\delta$-differential field.  Note that as fields, $C\langle y_1, \ldots y_n\rangle_D = C(y_1)\langle Dy_1, y_2, \ldots y_n\rangle_\delta$. \\

Fix a positive integer $s > \text{tr.deg}_{C\langle y_1, \ldots y_n\rangle_D}K$. From  \cite{MitschiSinger1996}, we know that any connected linear algebraic group defined over $C$ is the differential Galois group of a PV-extension of $k$, so let $F$ be a PV extension of $k$ with $\delta$-Galois group $G^s$. Furthermore, since $k$ is a {$C_1$ field\footnote{Refer to \cite[A.52, P.372]{PuSi2003} for the definition of $C_1$ field. For example, $C(z)$ and $C((z))$ are $C_1$ field when $C$ is algebraically closed.}}, by \cite[Corollary 1.32]{PuSi2003}, we may write $F = k(g_1, \ldots , g_s)$ with $(g_1, \ldots , g_s) \in G^s(F)$ {(i.e., each $g_i\in G(F)$)}. $E$ contains the $\delta$-differential closure of $k$ so we may assume that $E$ contains $F$ since  we know that any strongly normal extension of $k$ can be embedded in its differential closure. From now on all fields consdered will be in $E$. \\

We now show that for some $i$, $K(g_i)$ is a $\delta$-PV extension of $K$ with $\delta$-differential Galois group $G$. This is done in three steps:
\begin{itemize}
\item {\bf Step 1.} {\it The fields $F$ and $k\langle Dy_1, y_2, \ldots , y_n\rangle_\delta$ are free\footnote{See \cite[Ch.\,VII,\S3]{LANG} for the definition of free.} over $k$}. A computaion shows that the  elements $Dy_1, y_2, \ldots , y_n$ are $\delta$-differentially independent over $k$. Now, assume $u _1, \ldots , u_m \in k\langle Dy_1, y_2, \ldots , y_n\rangle_\delta$ are algebraically independent over $k$ but become algebraically dependent over $F$. The polynomial relation witnessing this dependence would yield (after clearing denominators) a $\delta$-differential dependence among $Dy_1, y_2, \ldots , y_n$ \underline{over $F$}. We have that $F$ is a $\delta$-differential algebraic extension of $k$ so  \cite[Corollary 3, p.~107]{KolchinDAAG} would imply that $Dy_1, y_2, \ldots , y_n$ are $\delta$-differentially dependent over $k$ as well, a contradiction.

\item {\bf Step 2.} {\it The $\delta$-differential Galois group $H$ of $k\langle Dy_1, \ldots , y_n\rangle_\delta(g_1, \ldots , g_s)$ over \\ $k\langle Dy_1, \ldots , y_n\rangle_\delta  $ is $G^s$.} Since $H$ is a subgroup of $G^s$, all we need to show is that they have the same dimension. From \cite[Corollary 1.30(3)]{PuSi2003}, we know that $\dim_C G^s = \text{tr.deg}_kk(g_1, \ldots g_s)$. Since $F = k(g_1, \ldots ,g_s) \mbox{ and }
k\langle Dy_1, y_2, \ldots, y_n\rangle$ are free over $k$ we have that 
\begin{align*} &  \ \ \ \  \text{tr.deg}_kk(g_1, \ldots g_s \\&=\text{tr.deg}_{k\langle Dy_1, \ldots , y_n\rangle}k\langle Dy_1, \ldots , y_n\rangle(g_1, \ldots g_s) \\& = \dim_CH.
\end{align*}

\item {\bf Step 3.} {\it For some $i$, the $\delta$-Galois group of $K(g_i)$ over $K$ is $G$.}  We will argue by contradiction. Assume that for all $i$, the $\delta$-Galois group of $K(g_i)$ over $K$ is a proper subgroup of $G$. In this case, we have that $\text{tr.deg}_KK(g_i)\leq t-1$ where $t = \dim_CG$. Therefore we have   
\begin{align*}
& \ \ \ \  \text{tr.deg}_{C\langle y_1, \ldots y_n\rangle_D}K(g_1, \ldots ,g_s)\\&=\text{tr.deg}_KK(g_1, \ldots ,g_s) + \text{tr.deg}_{C\langle y_1, \ldots y_n\rangle_D}K\\&\leq
 s(t-1) + \text{tr.deg}_{C\langle y_1, \ldots y_n\rangle_D}K \\&=st - s + \text{tr.deg}_{C\langle y_1, \ldots y_n\rangle_D}K \\&< st
\end{align*}
since $s >\text{tr.deg}_{C\langle y_1, \ldots y_n\rangle_D}K$.
\sloppy As $C\langle y_1, \ldots y_n\rangle_D(g_1, \ldots , g_s) \subset K(g_1, \ldots , g_s)$, we have $\text{tr.deg}_{C\langle y_1, \ldots y_n\rangle_D}C\langle y_1, \ldots y_n\rangle_D(g_1, \ldots , g_s) <st$.  But Step 2 implies $\text{tr.deg}_{C\langle y_1, \ldots y_n\rangle_D}C\langle y_1, \ldots y_n\rangle_D(g_1, \ldots , g_s) = st$.  This gives us a contradiction. 
So there exists some $i$ such that $G$ is the $\delta$-Galois group of $K(g_i)$ over $K$.
\end{itemize}

Replacing $\delta$ by $D/Dy_1$, turns any $\delta$-PV extension of $K$ into a $D$-PV extension with the same differential Galois group.
Hence, $G$ is the $D$-Galois group of $K(g_i)_D$ over $K$.

\end{proof}

\bibliographystyle{plain}
\bibliography{research}

\end{document}